\newcommand{\RR}{{\mathbb{R}}}
\newcommand{\CC}{{\mathbb{C}}}
\newtheorem{remark}[theorem]{Remark}
\newtheorem{example}[theorem]{Example}
\begin{document}

\thispagestyle{empty}
\bibliographystyle{siam}

\title{Decay bounds for functions of matrices with banded or
Kronecker structure}
\author{Michele Benzi\thanks{Department of Mathematics and Computer
Science, Emory University, Atlanta, Georgia 30322, USA
(benzi@mathcs.emory.edu). The work of this author was supported
by National Science Foundation grants
DMS-1115692 and DMS-1418889.}  \and 
Valeria Simoncini\thanks{Dipartimento di Matematica, Universit\`a di Bologna,
Piazza di Porta S.~Donato 5, I-40127 Bologna, Italy (valeria.simoncini@unibo.it).
The work of this author was partially supported by the FARB12SIMO grant
of the Universit\`a di Bologna.}
}

\maketitle

\markboth{{\sc M.~Benzi and V.~Simoncini}}{Decay bounds for matrix
functions}

\begin{abstract}
We present decay bounds for a broad class of Hermitian matrix functions
where the matrix argument is banded or a Kronecker sum of banded matrices.
Besides being significantly tighter than previous estimates, 
the new bounds 
closely capture the actual (non-monotonic)
decay behavior of the entries of functions of matrices with Kronecker sum
structure. We also discuss 
extensions to more general sparse matrices.
\end{abstract}

\begin{keywords}
matrix functions, banded matrices, sparse matrices, off-diagonal decay,
Kronecker structure
\end{keywords}

\begin{AMS}
15A16, 65F60
\end{AMS}

\section{Introduction}
The decay behavior of the entries of functions of banded and sparse matrices
has attracted considerable interest over the years.
It has been known for some time that if $A$ is a banded Hermitian matrix and $f$ is
a smooth function with no singularities in a neighborhood of the spectrum of $A$,
then the entries in $f(A)$ usually exhibit rapid decay in magnitude away from the
main diagonal. The decay rates are typically exponential,
with even faster decay in the case of entire functions.  

The interest for the decay behavior of matrix functions stems largely from its
importance for a number of applications including numerical analysis
\cite{Benzi.Golub.99,CanutoSimonciniVeraniJOMP.14,DelLopPel05,Demko,EP88,Meurant,ye13},
harmonic analysis \cite{Bask1,Gro10,Jaffard},
quantum chemistry \cite{BBR13,BM12,Lin14,Shao}, signal processing
\cite{KSW,Strohmer}, quantum information theory \cite{CE06,CEPD06,ECP10},
 multivariate statistics \cite{Aune}, queueing models \cite{Bini05}, control
of large-scale dynamical systems \cite{Haber14}, quantum dynamics \cite{Giscard14},
random matrix theory \cite{Molinari}, and others.
The first case to be analyzed in detail was that of $f(A) = A^{-1}$,
see \cite{Demko,DMS,EP88,Kershaw}. In these papers one can find exponential
decay bounds for the entries of the inverse of banded matrices.
A related, but quite distinct line of research concerned
the study of inverse-closed matrix algebras, where the decay behavior
in the entries of a (usually infinite) matrix
$A$ is ``inherited" by the entries of
$A^{-1}$. Here we mention \cite{Jaffard}, 
where it was observed that a similar decay behavior occurs for the entries of
$f(A) = A^{-1/2}$, as well as
\cite{Bask1,Bask2,Gro10,GroLei06,KSW}, among others.

The study of the decay behavior for 
general analytic functions of banded matrices, including the
important case of the matrix exponential, was initiated in \cite{Benzi.Golub.99,iserles}
and continued for possibly non-normal matrices and general sparsity patterns 
in \cite{Benzi2007}; further contributions in these directions include
\cite{BB14,DelLopPel05,mastronardi,Shao}. 
Collectively, these papers have largely elucidated the question of when one can expect
exponential decay in the entries of $f(A)$, in terms of conditions that 
the function $f$ and the matrix $A$ must satisfy. Some of these papers also
address the important problem of when the rate of decay is asymptotically independent
of the dimension $n$ of the problem, a condition that allows, at least in principle,
for the approximation of $f(A)$ with a computational cost scaling linearly in $n$
(see, e.g., \cite{BBR13,Benzi2007,BM12}).

A limitation of these papers is that they provide decay bounds for the entries
of $f(A)$ that are often pessimistic and may not capture the correct, non-monotonic
decay behavior actually observed in many situations of practical interest. A first step
to address this issue was taken in \cite{CanutoSimonciniVeraniLAA.14}, where new
bounds for the inverses of matrices that are Kronecker sums of banded matrices 
(a kind of structure of considerable importance in the numerical solution of PDE
problems) were obtained; see also \cite{Meurant} for an early such
analysis for a special class of matrices, and \cite{mastronardi} for functions
of multiband matrices. 

In this paper we build on the work in \cite{CanutoSimonciniVeraniLAA.14}
to investigate the decay behavior in (Hermitian) matrix functions where the
matrix is a Kronecker sum of banded  matrices. We also present new bounds
for functions of banded (more generally, sparse)
 Hermitian matrices. For certain broad
classes of analytic functions that frequently arise in applications 
(including as special cases the resolvent, the inverse square root, and the exponential)
we obtain improved decay bounds that capture much more closely the actual decay
behavior of the matrix entries than previously published bounds. 
A significant difference with previous work in this area is that our bounds
are expressed in integral form, and in order to apply the bounds to specific
matrix functions it may be necessary to evaluate these integrals numerically.

The paper is organized as follows. In section~\ref{sec:pre} we provide basic
definitions and material from linear algebra and analysis utilized in the rest
of the paper. In section~\ref{sec:prev} we briefly recall earlier work on
decay bounds for matrix functions. New decay results for functions of
banded matrices are given in section~\ref{sec:banded}.
Generalizations to more general sparse matrices
are briefly discussed in section~\ref{sec:ext}. Functions
of matrices with Kronecker sum structure are treated in section~\ref{sec:Kron}.
Conclusive remarks are given in section~\ref{sec:Conc}.

\section{Preliminaries}\label{sec:pre}

In this section we give some basic definitions and background
information on the types of matrices and functions considered in
the paper.

\subsection{Banded matrices and Kronecker sums}

We begin by recalling
two standard definitions. 

\begin{definition}
We say that a matrix $M\in \CC^{n\times n}$ is $\beta$-banded if its 
entries $M_{ij}$ satisfy 
$M_{ij} = 0$ for $|i-j|>\beta$.
\end{definition}

\begin{definition}
Let $M_1, M_2\in \CC^{n\times n}$.
We say that a matrix ${\cal A}\in \CC^{n^2\times n^2}$ is the {\em Kronecker sum}
of $M_1$ and $M_2$ if
\begin{eqnarray}\label{eqn:kron}
{\cal A} = M_1\oplus M_2 := M_1\otimes I + I\otimes M_2\,,
\end{eqnarray}
where $I$ denotes the $n\times n$ identity matrix.
\end{definition}

In this paper we will be especially concerned with the case $M_1=M_2=M$,
where $M$ is $\beta$-banded and Hermitian positive definite (HPD).
In this case $\cal A$ is also HPD.

The definition of Kronecker sum can easily be extended to three or more
matrices. For instance, we can define
$$
{\cal A} = M_1\oplus M_2 \oplus M_3 :=
M_1\otimes I\otimes I +
I\otimes M_2\otimes I +
I\otimes I\otimes M_3  .
$$

The Kronecker sum of two matrices is well-behaved under matrix exponentiation.
Indeed,
the following relation holds (see, e.g., \cite[Theorem 10.9]{Higham2008}):
\begin{eqnarray}\label{eqn:exp_kron}
\exp(M_1\oplus M_2) = 
\exp(M_1)\otimes \exp(M_2) .
\end{eqnarray}

Similarly, the following matrix trigonometric identities hold
for the matrix sine and cosine \cite[Theorem 12.2]{Higham2008}:
\begin{eqnarray}\label{eqn:sin_kron}
\sin(M_1\oplus M_2) = \sin(M_1)\otimes \cos(M_2) + \cos(M_1)\otimes \sin(M_2)
\end{eqnarray}
and
\begin{eqnarray}\label{eqn:cos_kron}
\cos(M_1\oplus M_2) = \cos(M_1)\otimes \cos(M_2) - \sin(M_1)\otimes \sin(M_2).
\end{eqnarray}

As we will see, identity (\ref{eqn:exp_kron}) 
will be useful in extending decay
results for functions of banded matrices to functions of matrices with
Kronecker sum structure.

\subsection{Classes of functions defined by integral transforms}\label{sec:classes}
We will be concerned with analytic functions of matrices. 
It is well known that if $f$ is a function analytic in a domain $\Omega \subseteq \CC$
containing the spectrum of a matrix $A\in \CC^{n\times n}$, then
\begin{eqnarray}\label{eqn:contour}
f(A) = \frac {1}{2\pi i}\int_{\Gamma} f(z)(zI - A)^{-1} {\rm d}z\,,
\end{eqnarray}
where $i= \sqrt{-1}$ is the imaginary unit and $\Gamma$ is any simple closed curve
surrounding the eigenvalues of $A$ and entirely contained in $\Omega$, 
oriented counterclockwise.

Our main
results concern certain analytic functions that can be represented as integral
transforms of measures, in particular, {\em strictly completely monotonic
functions} (associated with the Laplace--Stieltjes transform) and {\em Markov functions}
(associated with the Cauchy--Stieltjes transform). Here we briefly review some
basic properties of these functions and the relationship between the
two classes.
We begin with the following definition (see \cite{Widder.46}).

\vskip 0.01in

\begin{definition}
Let $f$ be defined in the interval $(a,b)$ where $-\infty \le a < b \le +\infty$.
Then, $f$ is said to be {\em completely monotonic} in $(a,b)$ if 
$$(-1)^{k}f^{(k)} (x) \ge 0 \quad {\rm for\ all} \quad a < x < b \quad {\rm and\ all}
\quad k=0,1,2,\ldots $$
Moreover, $f$ is said to be {\em strictly completely monotonic}
in $(a,b)$ if 
$$(-1)^{k}f^{(k)} (x) > 0 \quad {\rm for\ all} \quad a < x < b \quad {\rm and\ all}
\quad k=0,1,2,\ldots $$
\end{definition}

Here $f^{(k)}$ denotes the $k$th derivative of $f$, with $f^{(0)}\equiv f$.
It is shown in \cite{Widder.46} that if $f$ is completely monotonic
in $(a,b)$, it can be extended to an analytic function in the open disk
$|z - b| < b - a$ when $b$ is finite. When $b=+\infty$, $f$ is analytic in
$\Re(z) > a$. Therefore, for each $y\in (a,b)$ we have that $f$ is analytic
in the open disk $|z - y| < R(y)$, where $R(y)$ denotes the radius of convergence
of the power series expansion
of $f$ about the point $z=y$. Clearly, $R(y) \ge y-a$ for $y\in (a,b)$.

In \cite{Bernstein.29} Bernstein proved that a function $f$ is completely monotonic
in $(0,\infty)$ if and only if $f$ is the Laplace--Stieltjes transform
of $\alpha (\tau)$;
\begin{equation}\label{bern}
f(x) = \int_0^\infty {\rm e}^{-\tau x} {\rm d}\alpha(\tau),
\end{equation}
where $\alpha (\tau)$ is nondecreasing and
the integral in (\ref{bern}) converges for all $x>0$. Moreover,
under the same assumptions $f$ can be extended
to an analytic function on the positive half-plane $\Re(z) > 0$.
A refinement of this result (see \cite{Dub40}) states that
$f$ is strictly completely monotonic
in $(0,\infty)$ if it is completely monotonic there and moreover the
function $\alpha (\tau)$ has at least one positive point of increase, that is,
there exists a $\tau_0 > 0$ such that $\alpha(\tau_0+\delta) > \alpha(\tau_0)$ 
for any $\delta >0$.

Prominent examples of strictly completely monotonic functions include (see \cite{Varga.68}):

\begin{enumerate}
\item $f_1(x) = 1/x = \int_0^\infty {\rm e}^{-x\tau} d\alpha_1(\tau)$ for $x>0$,
where $\alpha_1(\tau) = \tau$ for $\tau\ge 0$.
\item $f_2(x) = {\rm e}^{-x} = \int_0^\infty {\rm e}^{-x\tau} d\alpha_2(\tau)$ for $x>0$,
where $\alpha_2(\tau) = 0$ for $0\le \tau < 1$ and $\alpha_2(\tau) = 1$ for $\tau\ge 1$.
\item $f_3(x) = (1 - {\rm e}^{-x})/x = \int_0^\infty {\rm e}^{-x\tau} d\alpha_3(\tau)$ 
for $x>0$,
where $\alpha_3 (\tau) = \tau$ for $0\le \tau \le 1$, and $\alpha_3(\tau) = 1$ for $\tau\ge 1$.
\end{enumerate}

\vspace{0.1in}
Other examples include the functions $x^{-\sigma}$ (for any $\sigma > 0$),
$\log(1+1/x)$ and $\exp(1/x)$, 
all strictly completely monotonic on $(0,\infty)$. 
Also, products and positive linear combinations
of strictly completely monotonic functions are strictly completely monotonic, 
as one can readily check.

A closely related class of functions is given by the Cauchy--Stieltjes (or 
Markov-type) functions, which can be written as 
\begin{eqnarray}\label{eqn:markov}
f(z) = \int_\Gamma \frac {{\rm d}\gamma(\omega)} {z-\omega},
\quad z\in \CC \setminus \Gamma\,,
\end{eqnarray}
where $\gamma$ is a (complex) measure supported on a closed set $\Gamma \subset \CC$ 
and the integral is absolutely convergent.
In this paper we are especially interested in the special case $\Gamma = (-\infty, 0]$
so that
$$f(x) = \int_{-\infty}^0 \frac {{\rm d}\gamma(\omega)} {x - \omega}, 
\quad x\in \CC \setminus (-\infty, 0]\,,
$$
where $\gamma$ is now a (possibly signed) real measure.
The following functions, which occur in various applications (see, e.g., \cite{Guettel.13}
and references therein),
fall into this class:
\begin{eqnarray*}
&& z^{-\frac 1 2} = \int_{-\infty}^0 \frac 1 {z-\omega} \frac 1 
{\pi \sqrt{-\omega}} {\rm d}\omega,
\\
&& \frac{{\rm e}^{-t\sqrt{z}}-1}{z} = \int_{-\infty}^0 \frac 1 {z-\omega} \frac 
{\sin(t\sqrt{-\omega})}{-\pi \omega} {\rm d}\omega,
\\
&& \frac{\log(1+z)}{z} = \int_{-\infty}^{-1} \frac 1 {z-\omega} \frac 1 
{(-\omega)} {\rm d}\omega.
\end{eqnarray*}

The two classes of functions just introduced overlap.
Indeed, it is easy to see (e.g., \cite{Merkle}) that functions
of the form
$$f(x) = \int_0^{\infty} \frac {{\rm d}\mu(s)}{ x + \omega },$$
with $\mu$ a positive measure, are strictly completely monotonic
on $(0,\infty)$; 
but every such function can also
be written in the form
$$f(x) = \int_{-\infty}^0 \frac {{\rm d}\gamma(\omega)}{ x - \omega}, 
\quad \gamma(\omega) = - \mu(-\omega),$$
and therefore it is a Cauchy--Stieltjes function.  We note that $f(x) = \exp(-x)$
is an example of a function that is strictly completely monotonic but not 
a Cauchy--Stieltjes function. 

In the rest of the paper, the term {\em Laplace--Stieltjes function}
will be used to denote a function that is strictly completely monotonic on $(0,\infty)$.

\section{Previous work} \label{sec:prev}
In this section we briefly review some previous decay results from the
literature.

Given a $n\times n$ Hermitian positive definite $\beta$-banded matrix $M$, it was shown in
\cite{DMS} that
\begin{eqnarray}\label{eqn:demko}
|(M^{-1})_{ij} | \le C q^{\frac{|i-j|}{\beta}} 
\end{eqnarray}
for all $i,j=1,\ldots ,n$, where $q =(\sqrt{\kappa}-1)/(\sqrt{\kappa}+1)$,
$\kappa$ is the spectral condition number of $M$,
$C = \max\{1/\lambda_{\rm min}(M), \hat C\}$, and
$\hat C= (1+\sqrt{\kappa})^2/(2\lambda_{\max}(M))$. In this
bound the diagonal elements of $M$ are assumed not to be
greater than one, which can always be satisfied by dividing $M$
by its largest diagonal entry, after which the bound (\ref{eqn:demko}) 
will have to be multiplied
by its reciprocal. The bound is known to be sharp, in the sense
that it is attained for a certain tridiagonal Toeplitz matrix.
We mention that (\ref{eqn:demko}) is also valid for 
infinite and bi-infinite matrices as long as they have finite
condition number, i.e., both $M$ and $M^{-1}$ are bounded.
 Using the identity $M^{-1} = (M^*M)^{-1}M^*$, simple decay bounds
were also obtained in \cite{DMS} for non-Hermitian matrices.

Similarly,
if $M$ is $\beta$-banded and Hermitian and $f$ is analytic on a region of the
complex plane containing
the spectrum $\sigma (M)$ of $M$, then there exist positive constants $C$ and
$q<1$ such that
\begin{eqnarray}\label{eqn:BG}
|(f(M))_{ij} | \le C q^{\frac{|i-j|}{\beta}},
\end{eqnarray}
where $C$ and $q$ can be expressed in terms of the parameter of a certain
ellipse surrounding $\sigma (M)$ and of the maximum modulus of $f$ on this ellipse; see
\cite{Benzi.Golub.99}. The bound (\ref{eqn:BG}), in general, is not
sharp; in fact, since there are infinitely many ellipses containing
$\sigma(M)$ in their interior and such that $f$ is analytic inside the
ellipse and continuous on it, one should think of (\ref{eqn:BG}) as a
parametric family of bounds rather than a single bound. By tuning the 
parameter of the ellipse one can obtain different bounds, usually
involving a trade-off between the values of $C$ and $q$.
This result was extended in \cite{Benzi2007} to the case where $M$ is
a sparse matrix with a general sparsity pattern, using the graph distance
instead of the distance from the main diagonal; see also \cite{CE06,Jaffard} and
section \ref{sec:ext} below.
Similar bounds for analytic functions of non-Hermitian matrices can be found
in \cite{BB14,Benzi2007}.

Practically all of the above results consist of exponential decay bounds on the
magnitude of the entries of $f(M)$. However, for entire functions
the actual decay is typically superexponential, rather than exponential.
Such bounds have been obtained by Iserles for the exponential of
a tridiagonal matrix in \cite{iserles}. This paper also presents
superexponential decay bounds for the exponential of banded matrices,
but the bounds only apply at sufficiently large distances from
the main diagonal. None of these bounds require $M$ to be Hermitian.
Superexponential decay bounds for the exponential
of certain infinite tridiagonal skew-Hermitian matrices
arising in quantum mechanical computations have been
recently obtained in \cite{Shao}.

\section{Decay estimates for functions of a banded matrix} \label{sec:banded}
In this section we present new decay bounds for functions of matrices 
$f(M)$ where $M$ is a banded, Hermitian and positive definite. 
First, we make use of an important result from \cite{HocLub97} to obtain
decay bounds for the entries of the exponential of a banded, Hermitian, positive
semidefinite matrix $M$.  
This result will
then be used to obtain bounds or estimates on the entries of $f(M)$, where
$f$ is strictly completely monotonic.
In a similar manner, we will obtain bounds or estimates on the entries of $f(M)$
where $f$ is a Markov function by making use of the classical bounds of
Demko et al.~\cite{DMS} for the entries of the inverses of banded positive
definite matrices.

In section \ref{sec:Kron} we will use these results to obtain bounds
for matrix functions $f({\cal A})$, 
where $\cal A$ is a Kronecker sum of banded matrices and $f$ 
belongs to one of the two above-mentioned classes of functions.

\subsection{The exponential of a banded Hermitian matrix}
We first recall (with a slightly different notation)
an important result due to Hochbruck and Lubich \cite{HocLub97}.
Here the $m$ columns of $V_m\in \CC^{n\times n}$ form an orthonormal basis for the Krylov
subspace $K_m(M,v)={\rm span}\{v, Mv, \ldots, M^{m-1}v\}$ with $\|v\|=1$, and $H_m = V_m^* M V_m$.

\begin{theorem}\label{th:HL}
Let $M$ be a Hermitian positive semidefinite matrix with eigenvalues
in the interval $[0,4\rho]$. Then the error in the Arnoldi approximation
of $\exp(\tau M) v$ with $\|v\|=1$, namely 
$\varepsilon_m:= \|\exp(-\tau M) v - V_m \exp(-\tau H_m) e_1 \|$,
 is bounded in the following ways:
\begin{enumerate}
\item[i)] $\varepsilon_m \le
10 \exp(-m^2/(5\rho\tau))$, for $\rho\tau\ge 1$ and $\sqrt{4\rho\tau}\le m \le 2\rho\tau$
\item[ii)] $\varepsilon_m \le
10 (\rho\tau)^{-1} \exp(-\rho\tau) \left ( \frac{{\rm e}\rho\tau}{m}\right)^m$ for
$m\ge 2\rho\tau$.
\end{enumerate}
\end{theorem}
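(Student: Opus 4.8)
The plan is to reduce the estimation of $\varepsilon_m$ to a scalar polynomial approximation problem and then to quantify that approximation by a Chebyshev expansion of the exponential. First I would exploit the defining property of the Arnoldi process: for every polynomial $p$ of degree at most $m-1$ one has $p(M)v = V_m\, p(H_m) e_1$, since $H_m = V_m^* M V_m$ represents $M$ on $K_m(M,v)$ and the vectors $v, Mv, \dots, M^{m-1}v$ all lie in the range of $V_m$ (recall $v = V_m e_1$). Subtracting such a $p$ from both $\exp(-\tau M)$ and $\exp(-\tau H_m)$ and using that $V_m$ has orthonormal columns, the triangle inequality yields
$$\varepsilon_m \le \|(\exp(-\tau M)-p(M))v\| + \|(\exp(-\tau H_m)-p(H_m))e_1\| \le 2\max_{x\in[0,4\rho]}\bigl|\exp(-\tau x)-p(x)\bigr|,$$
where the last step uses that $M$ is Hermitian with spectrum in $[0,4\rho]$ and that, because $H_m$ is a compression of $M$, its eigenvalues lie in the same interval (by Cauchy interlacing / the Rayleigh quotient bound). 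Thus the matrix problem is entirely governed by how well a degree-$(m-1)$ polynomial can approximate $\exp(-\tau x)$ uniformly on $[0,4\rho]$.

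Next I would make the near-optimal choice of $p$ via a truncated Chebyshev series. Rescaling $[0,4\rho]$ to $[-1,1]$ through $x = 2\rho(1+\xi)$ and writing $\alpha = 2\rho\tau$, the target becomes $\exp(-\tau x) = \exp(-2\rho\tau)\exp(-\alpha\xi)$, and the classical Jacobi--Anger type expansion gives
$$\exp(-\alpha\xi) = I_0(\alpha) + 2\sum_{k=1}^\infty (-1)^k I_k(\alpha) T_k(\xi),$$
with $I_k$ the modified Bessel functions and $T_k$ the Chebyshev polynomials. Taking $p$ to be the truncation at degree $m-1$ and using $|T_k(\xi)|\le 1$, the uniform error is bounded by the Bessel tail, whence
$$\varepsilon_m \le 4\exp(-2\rho\tau)\sum_{k=m}^\infty I_k(2\rho\tau).$$

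The remaining and most delicate step is to bound this Bessel tail sharply enough to produce the two stated regimes together with the explicit constants. For $m\ge 2\rho\tau$ I would use the power-series bound $I_k(\alpha)\le (\alpha/2)^k e^{(\alpha/2)^2/(k+1)}/k!$, obtained from $I_k(\alpha)=\sum_{j\ge0}(\alpha/2)^{2j+k}/(j!(j+k)!)$ via $(j+k)!\ge k!(k+1)^j$, together with Stirling's inequality $k!\ge(k/e)^k$; this turns the leading term into $(e\rho\tau/m)^m$, the tail is then dominated by a rapidly converging geometric-type series, and collecting the prefactor $\exp(-2\rho\tau)$ yields bound (ii) after absorbing constants into the leading $10$. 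For the transition regime $\sqrt{4\rho\tau}\le m\le 2\rho\tau$ with $\rho\tau\ge1$, one instead needs a Gaussian-type estimate for $I_m(\alpha)$ when $m$ is comparable to $\alpha$; combining such an estimate with the monotone decay of the tail produces the factor $\exp(-m^2/(5\rho\tau))$. I expect this final Bessel-tail analysis to be the main obstacle: the qualitative exponential and superexponential decay follow readily from the expansion above, but pinning down the sharp exponent ($1/5$ in the Gaussian case) and the uniform constant $10$ requires the careful, regime-dependent inequalities for modified Bessel functions that constitute the technical heart of the argument.
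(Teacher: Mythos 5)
The paper does not actually prove Theorem~\ref{th:HL}: it is quoted, with only notational changes, from Hochbruck and Lubich \cite{HocLub97}, so there is no in-paper proof to compare against. What you have written is essentially a reconstruction of the argument in that original source. The reduction $\varepsilon_m \le 2\min_{p}\max_{x\in[0,4\rho]}|{\rm e}^{-\tau x}-p(x)|$ (minimum over polynomials of degree at most $m-1$) via the Arnoldi exactness property $p(M)v=V_m p(H_m)e_1$ and the spectral containment of the compression $H_m$ is precisely their lemma, and the truncated Chebyshev expansion with modified Bessel coefficients, leading to $\varepsilon_m\le 4\,{\rm e}^{-2\rho\tau}\sum_{k\ge m}I_k(2\rho\tau)$, is exactly how they proceed. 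Your treatment of regime (ii) is essentially complete: the bound $I_k(\alpha)\le (\alpha/2)^k (k!)^{-1}\exp\bigl((\alpha/2)^2/(k+1)\bigr)$ combined with $k!\ge (k/{\rm e})^k$ and the geometric decay of the summands for $k\ge 2\rho\tau$ does deliver the stated estimate, with room to spare in the constant. The one genuine gap is regime (i): you assert that ``a Gaussian-type estimate for $I_m(\alpha)$ when $m$ is comparable to $\alpha$'' yields the factor $\exp(-m^2/(5\rho\tau))$, but you neither state nor verify such an estimate, and this is where the real work lies. A crude application of the uniform asymptotics for $I_k(\alpha)$ gives an exponent nearer $m^2/(6\rho\tau)$, and in this regime the tail terms do not decay geometrically until $k$ exceeds $2\rho\tau$, so the sum must be compared with a Gaussian integral rather than a geometric series; obtaining the constants $10$ and $1/5$ requires exactly the careful regime-dependent Bessel inequalities you defer. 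As an outline your route is the correct one, but to claim the theorem as stated you would need to supply that Bessel-tail lemma explicitly or, as the paper does, simply cite \cite{HocLub97}.
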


\vskip 0.01in

With this result we can establish bounds for the entries of the
exponential of a banded Hermitian matrix.

\vskip 0.01in

\begin{theorem}\label{th:boundexp}
Let $M$ be as in Theorem \ref{th:HL}. Assume in addition 
that $M$ is $\beta$-banded. Then, with
the notation of Theorem \ref{th:HL} and for $k\ne t$: 
\begin{enumerate}
\item[i)] For $\rho\tau\ge 1$ and $\sqrt{4\rho\tau}\le |k-t|/\beta\le 2\rho\tau$,
$$
| (\exp(-\tau M) )_{kt}|
\le
10 \exp\left(-\frac{(|k-t|/\beta)^2}{5 \rho\tau}\right) ;
$$
\item[ii)]
For $|k-t|/\beta \ge 2\rho\tau$,
$$
| (\exp(-\tau M) )_{kt}|
\le
10 \frac{\exp(-\rho\tau)}{\rho\tau}
\left ( \frac{{\rm e}\rho\tau}{\frac{|k-t|}{\beta}}\right)^{\frac{|k-t|}{\beta}} .
$$
\end{enumerate}
\end{theorem}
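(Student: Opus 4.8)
The plan is to reinterpret each off-diagonal entry of $\exp(-\tau M)$ as a quantity that is controlled by the Arnoldi approximation error of Theorem~\ref{th:HL}, exploiting the bandedness of $M$ to make the polynomial part of that approximation vanish at the entry of interest.

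First I would write $(\exp(-\tau M))_{kt} = e_k^* \exp(-\tau M) e_t$ and run the Krylov recurrence starting from the unit vector $v = e_t$, so that $\|v\|=1$. By the defining property of the Arnoldi approximation, $V_m \exp(-\tau H_m) e_1$ lies in the Krylov subspace $K_m(M, e_t)$, and hence equals $p_{m-1}(M)\, e_t$ for some polynomial $p_{m-1}$ of degree at most $m-1$. The key structural observation is that, since $M$ is $\beta$-banded, each power $M^j$ is $(j\beta)$-banded, and therefore $p_{m-1}(M)$ is $((m-1)\beta)$-banded. Consequently, whenever $|k - t| > (m-1)\beta$, the $k$-th entry of the vector $p_{m-1}(M)\, e_t$ vanishes, so that
$$
(\exp(-\tau M))_{kt} = e_k^* \left( \exp(-\tau M)\, e_t - p_{m-1}(M)\, e_t \right).
$$
Applying the Cauchy--Schwarz inequality together with $\|e_k\| = 1$ then gives
$$
|(\exp(-\tau M))_{kt}| \le \|\exp(-\tau M)\, e_t - V_m \exp(-\tau H_m) e_1\| = \varepsilon_m,
$$
which is valid for any admissible $m$ satisfying $(m-1)\beta < |k-t|$. (This is also where the restriction $k\ne t$ enters: for the diagonal entry no nontrivial choice of $m$ produces a vanishing polynomial term.)

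It then remains to choose $m$ so as to make $\varepsilon_m$ as small as possible while staying within the two regimes of Theorem~\ref{th:HL}. Taking $m = |k-t|/\beta$ satisfies the constraint $(m-1)\beta < |k-t|$ and converts the hypotheses $\sqrt{4\rho\tau}\le m\le 2\rho\tau$ and $m \ge 2\rho\tau$ of Theorem~\ref{th:HL} directly into the interval conditions stated in cases (i) and (ii); substituting this value of $m$ into the respective bounds of Theorem~\ref{th:HL} yields precisely the claimed estimates.

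The only point requiring care -- a technical nuisance rather than a genuine obstacle -- is that $|k-t|/\beta$ need not be an integer, whereas the Arnoldi error $\varepsilon_m$ is defined for integer $m$. Since the bounds of Theorem~\ref{th:HL} are monotonically decreasing in $m$, replacing $m$ by a nearby integer weakens the inequality only in the harmless direction, so the stated form is recovered under the natural convention of evaluating the right-hand sides at the real number $|k-t|/\beta$. I would therefore expect the bulk of the argument to be the clean Krylov-plus-bandedness reduction above, with the HL theorem supplying the decay rates essentially verbatim.
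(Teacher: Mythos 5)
Your proposal is correct and follows essentially the same route as the paper's proof: identify $V_m \exp(-\tau H_m)e_1$ with $p_{m-1}(\tau M)e_t$, use the $(m-1)\beta$-bandedness of $p_{m-1}(\tau M)$ to make the $(k,t)$ entry of the polynomial term vanish, bound the entry by $\varepsilon_m$ via Cauchy--Schwarz, and invoke Theorem~\ref{th:HL} with $m = \lceil |k-t|/\beta\rceil$ together with monotonicity of the bounds in $m$. The paper makes the integer choice explicit by writing $|k-t| = (m-1)\beta + s$ with $1\le s\le\beta$, which is exactly the convention you describe.
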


\begin{proof}
We first note that an element of the Krylov subspace $K_m(M,v)$ is a polynomial
in $M$ times a vector, so that $V_m \exp(-\tau H_m) e_1 = p_{m-1}(\tau M) v$ for some polynomial
$p_{m-1}$ of degree at most $m-1$. Because $M$ is Hermitian and
$\beta$-banded, 
the matrix $p_{m-1}(\tau M)$ is
at most $(m-1)\beta$-banded.

Let now $k,t$ with $k\ne t$ be fixed, and write
$|k-t| = (m-1)\beta + s$ for some $m\ge 1$ and $s=1, \ldots, \beta$; in particular,
we see that $(p_{m-1}(\tau M))_{kt}=0$, moreover
$|k-t|/\beta \le m$. Consider first case ii). If $m \ge 2\rho\tau$,
for $v=e_t$ we obtain
\begin{eqnarray*}
| (\exp(-\tau M) )_{kt}| & = &
| (\exp(-\tau M) )_{kt} - (p_{m-1}(\tau M))_{kt}| \\
&= &
| e_k^T ( \exp(-\tau M) e_t - p_{m-1}(\tau M) e_t)|  \\
&\le &
\|\exp(-\tau M) e_t - p_{m-1}(\tau M) e_t \| \\
&\le&
10 (\rho\tau)^{-1} \exp(-\rho\tau)
\left ( \frac{e\rho\tau\beta}{|k-t|}\right)^{\frac{|k-t|}{\beta}} ,
\end{eqnarray*}
where in the last inequality Theorem~\ref{th:HL} was used for
$m |k-t|/\beta\ge 2\rho\tau$.
An analogous result is obtained for $m$ in the finite interval, so as to verify i).
\end{proof}


As remarked in \cite{HocLub97}, the restriction to positive semidefinite $M$ 
leads to no loss of generality, since a shift from $M$ to $M+\delta I$ entails
a change by a factor ${\rm e}^{\tau \delta}$ in the quantities of interest.

We also notice that in addition to Theorem \ref{th:HL}
 other asymptotic bounds exist for estimating the
error in the exponential function with Krylov subspace approximation; see, 
e.g., \cite{DK1, DK2}. An advantage of Theorem \ref{th:HL} is that it provides
explicit upper bounds, which can then be easily used for our purposes. 

\begin{example}\label{ex:exp}
{\rm
Figure \ref{fig:expM} shows the behavior of the bound in Theorem \ref{th:boundexp}
for two typical matrices. The plot on the left refers to the tridiagonal
matrix $M={\rm tridiag}(-1,4,-1)$ ($\beta=1$) of size $n=200$, with $\tau=4$, so that
$\tau\rho \approx 3.9995$. The $t$th column with $t=127$ is reported, and only the
values above $10^{-60}$ are shown.
The plot on the right refers to the pentadiagonal matrix
$M={\rm pentadiag}(-0.5,-1,4,-1,-0.5)$ ($\beta=2$) of size $n=200$, with
$\tau=4$, so that $\tau\rho \approx 4.4989$. The same column $t=127$ is shown.
Note the superexponential decay behavior.
In both cases, the estimate seems to be rather sharp.
}
\end{example}

\begin{figure}[t]
\centering
\includegraphics[width=2.5in,height=2.5in]{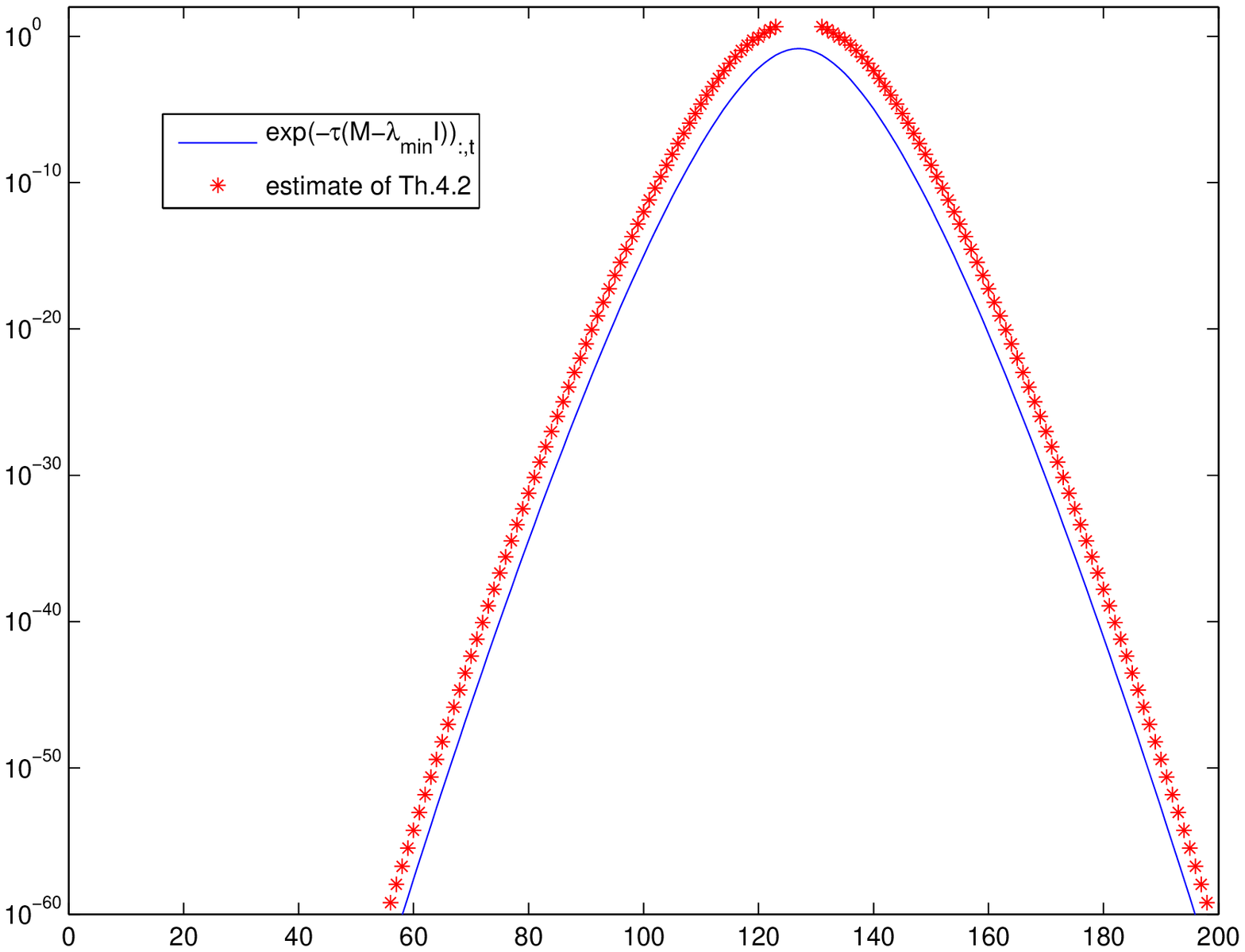}
\includegraphics[width=2.5in,height=2.5in]{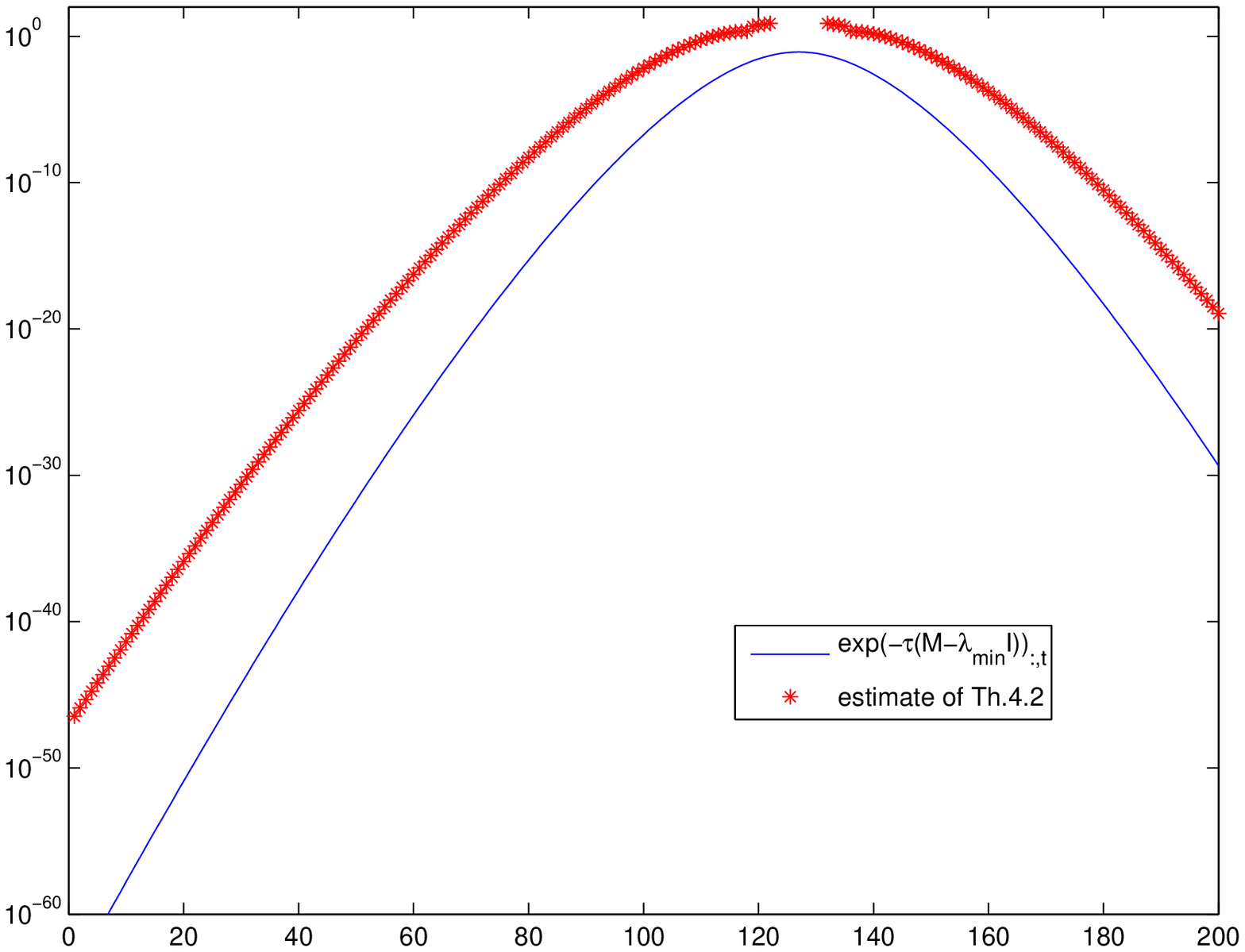}
\caption{Example \ref{ex:exp}. Bounds for $|\exp(-\tau(M-\lambda_{\min}I))|_{:,t}$, $t=127$,
using Theorem \ref{th:boundexp}. $M$ of
size $n=200$ and $\tau=4$.  
Left: $M={\rm tridiag}(-1,4,-1)$.  Right: $M={\rm pentadiag}(-0.5,-1,4,-1,-0.5)$.
Logarithmic scale.
\label{fig:expM}}
\end{figure}

\subsection{Bounds for Laplace--Stieltjes functions}
By exploiting the connection between the exponential function and 
Laplace--Stieltjes functions,
we can apply Theorem \ref{th:boundexp} to obtain bounds or
estimates for the entries of Laplace--Stieltjes matrix functions.

\begin{theorem}\label{th:LS}
Let $M=M^*$ be $\beta$-banded and positive definite, 
and let $\widehat M = M-\lambda_{\min} I$, 
with the spectrum of $\widehat M$
contained in $[0,4\rho]$.
Assume $f$ is a Laplace--Stieltjes function, so that it can be written in the form
$f(x) = \int_0^\infty {\rm e}^{-x\tau} {\rm d}\alpha(\tau)$. 
Then, with the notation and assumptions of Theorem \ref{th:boundexp} and for 
$|k-t|/\beta\ge 2$: 
\begin{eqnarray}
|f(M)|_{k,t} &\le& \int_0^\infty \exp(-\lambda_{\min}\tau) 
|(\exp(-\tau\widehat M))_{k,t}| {\rm d}\alpha(\tau)  \nonumber \\
&\le & 10 \int_0^{\frac{|k-t|}{2\rho\beta}} \exp(-\lambda_{\min}\tau)
\frac{\exp(-\rho\tau)}{\rho\tau}
\left ( \frac{{\rm e}\rho\tau}{\frac{|k-t|}{\beta}}\right)^{\frac{|k-t|}{\beta}} 
{\rm d}\alpha(\tau) \label{eqn:LSbound}  \\
&& \quad
 +10 \int_{\frac{|k-t|}{2\rho\beta}}^{\frac{|k-t|^2}{4\rho\beta^2}}
 \exp(-\lambda_{\min}\tau)
 \exp\left(-\frac{(|k-t|/\beta)^2}{5 \rho\tau}\right) {\rm d}\alpha(\tau)  \nonumber \\
&& + \int_{\frac{|k-t|^2}{4\rho\beta^2}}^\infty \exp(-\lambda_{\min}\tau) 
(\exp(-\tau\widehat M))_{k,t} {\rm d}\alpha(\tau) = I + II + III. \nonumber
\end{eqnarray}
\end{theorem}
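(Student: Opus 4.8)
The plan is to push the Laplace--Stieltjes representation of $f$ through the functional calculus of the Hermitian matrix $M$. Writing $M = U\Lambda U^*$ with $\Lambda = \mathrm{diag}(\lambda_1,\dots,\lambda_n)$ and each $\lambda_j > 0$ (by positive definiteness), every eigenvalue satisfies $f(\lambda_j) = \int_0^\infty e^{-\lambda_j\tau}\,{\rm d}\alpha(\tau)$, whence
$$
f(M)_{k,t} = \sum_j U_{kj}\, f(\lambda_j)\, \overline{U_{tj}}
= \int_0^\infty (\exp(-\tau M))_{k,t}\,{\rm d}\alpha(\tau),
$$
the interchange of the finite sum and the integral being justified by absolute convergence of the Laplace--Stieltjes integral on the positive spectrum of $M$. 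Applying the shift identity $\exp(-\tau M) = e^{-\tau\lambda_{\min}}\exp(-\tau\widehat M)$, I would rewrite the integrand so that $f(M)_{k,t} = \int_0^\infty e^{-\tau\lambda_{\min}}(\exp(-\tau\widehat M))_{k,t}\,{\rm d}\alpha(\tau)$.

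The first inequality in the statement then follows at once by taking absolute values and invoking the triangle inequality for integrals, using that $e^{-\tau\lambda_{\min}} > 0$ and that $\alpha$ is nondecreasing so that ${\rm d}\alpha$ defines a positive measure.

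For the second inequality, the idea is to split the $\tau$-integral into three pieces on each of which the appropriate estimate of Theorem~\ref{th:boundexp} is available, applied to the positive semidefinite, $\beta$-banded matrix $\widehat M = M - \lambda_{\min}I$ whose spectrum lies in $[0,4\rho]$. Setting $d := |k-t|/\beta$, case ii) of Theorem~\ref{th:boundexp} requires $d \ge 2\rho\tau$, that is $\tau \le \frac{|k-t|}{2\rho\beta}$, producing the integrand of term $I$; case i) requires $\sqrt{4\rho\tau}\le d\le 2\rho\tau$, that is $\frac{|k-t|}{2\rho\beta}\le \tau\le \frac{|k-t|^2}{4\rho\beta^2}$, producing term $II$; and on the remaining tail $\tau > \frac{|k-t|^2}{4\rho\beta^2}$ neither regime applies, so the entry $(\exp(-\tau\widehat M))_{k,t}$ is carried along unestimated as term $III$.

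The step I expect to require the most care is verifying that the hypotheses of each case hold throughout the corresponding subinterval, in particular the side condition $\rho\tau\ge 1$ demanded by case i). On the middle interval one has $\tau\ge \frac{|k-t|}{2\rho\beta}$, hence $\rho\tau \ge \tfrac12\,|k-t|/\beta = d/2 \ge 1$, where the final inequality is precisely the standing assumption $|k-t|/\beta\ge 2$; this is exactly where that hypothesis enters. Once the three $\tau$-ranges are matched to the two regimes of Theorem~\ref{th:boundexp}, substituting the respective bounds for $|(\exp(-\tau\widehat M))_{k,t}|$ and carrying the common factor $e^{-\tau\lambda_{\min}}$ through the integrals reproduces the three terms $I$, $II$, $III$ as displayed.
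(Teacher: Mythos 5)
Your proof is correct and follows exactly the route the paper intends: the paper states Theorem~\ref{th:LS} without an explicit proof, treating it as an immediate consequence of the Laplace--Stieltjes representation $f(M)=\int_0^\infty \exp(-\tau M)\,{\rm d}\alpha(\tau)$, the shift $\exp(-\tau M)=e^{-\tau\lambda_{\min}}\exp(-\tau\widehat M)$, and the three-way split of the $\tau$-axis matching the two regimes of Theorem~\ref{th:boundexp}. Your verification that the side condition $\rho\tau\ge 1$ on the middle interval is guaranteed precisely by the hypothesis $|k-t|/\beta\ge 2$ correctly identifies the one point needing care.
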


\vskip 0.01in

In general, these integrals may have to be evaluated numerically.
We observe that
in the above bound, the last term (III) does not significantly contribute
provided that $|k - t|$ is sufficiently large while $\rho$ and $\beta$
are not too large. 

As an illustration, consider the function
$f(x) = 1/\sqrt{x}$. For this function we have 
$\alpha(\tau) = \frac 1{\sqrt{\tau}} 
\Gamma(-\frac 1 2+1) = \sqrt{\pi/\tau}$ with $\tau \in (0,\infty)$.
We have
\begin{eqnarray*}
I + II &= &
10 \sqrt{\pi}  \int_0^{\frac{|k-t|}{2\rho\beta}}
 \frac{\exp(-\lambda_{\min}\tau)}{\tau\sqrt{\tau}}
\frac{\exp(-\rho\tau)}{\rho\tau}
\left ( \frac{{\rm e}\rho\tau}{\frac{|k-t|}{\beta}}\right)^{\frac{|k-t|}{\beta}} {\rm d}\tau \\
&& +
 10\sqrt{\pi} \int_{\frac{|k-t|}{2\rho\beta}}^{\frac{|k-t|^2}{4\rho\beta^2}}
 \frac{\exp(-\lambda_{\min}\tau)}{\tau\sqrt{\tau}}
 \exp\left(-\frac{(|k-t|/\beta)^2}{5 \rho\tau}\right)  {\rm d}\tau ,
\end{eqnarray*}
while
$$
III \le 
 \sqrt{\pi} \int_{\frac{|k-t|^2}{4\rho\beta^2}}^\infty 
\frac{\exp(-\lambda_{\min}\tau) }{\tau\sqrt{\tau}}
{\rm d}\tau .
$$

Figure \ref{fig:LS-1/2} shows two typical bounds for the entries of
$M^{-\frac 1 2}$ for the same matrices $M$ considered in Example \ref{ex:exp}.
The integrals $I$ and $II$ and the one appearing in the upper bound for $III$
have been evaluated accurately
using the built-in Matlab function {\tt quad}.
Note that the decay is now exponential.
In both cases, the decay is satisfactorily captured. 

\begin{figure}[thb]
\centering
\includegraphics[width=2.5in,height=2.5in]{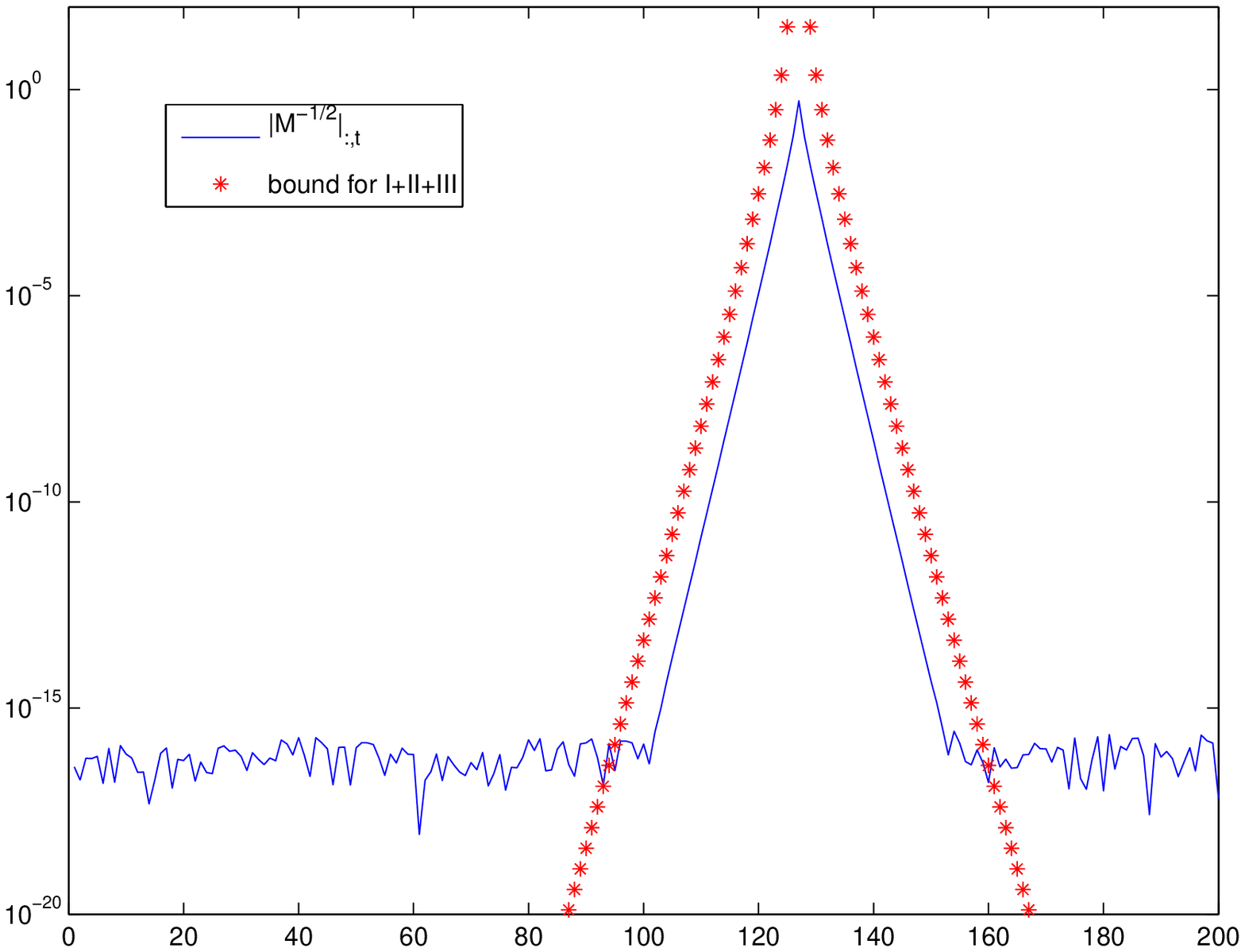}
\includegraphics[width=2.5in,height=2.5in]{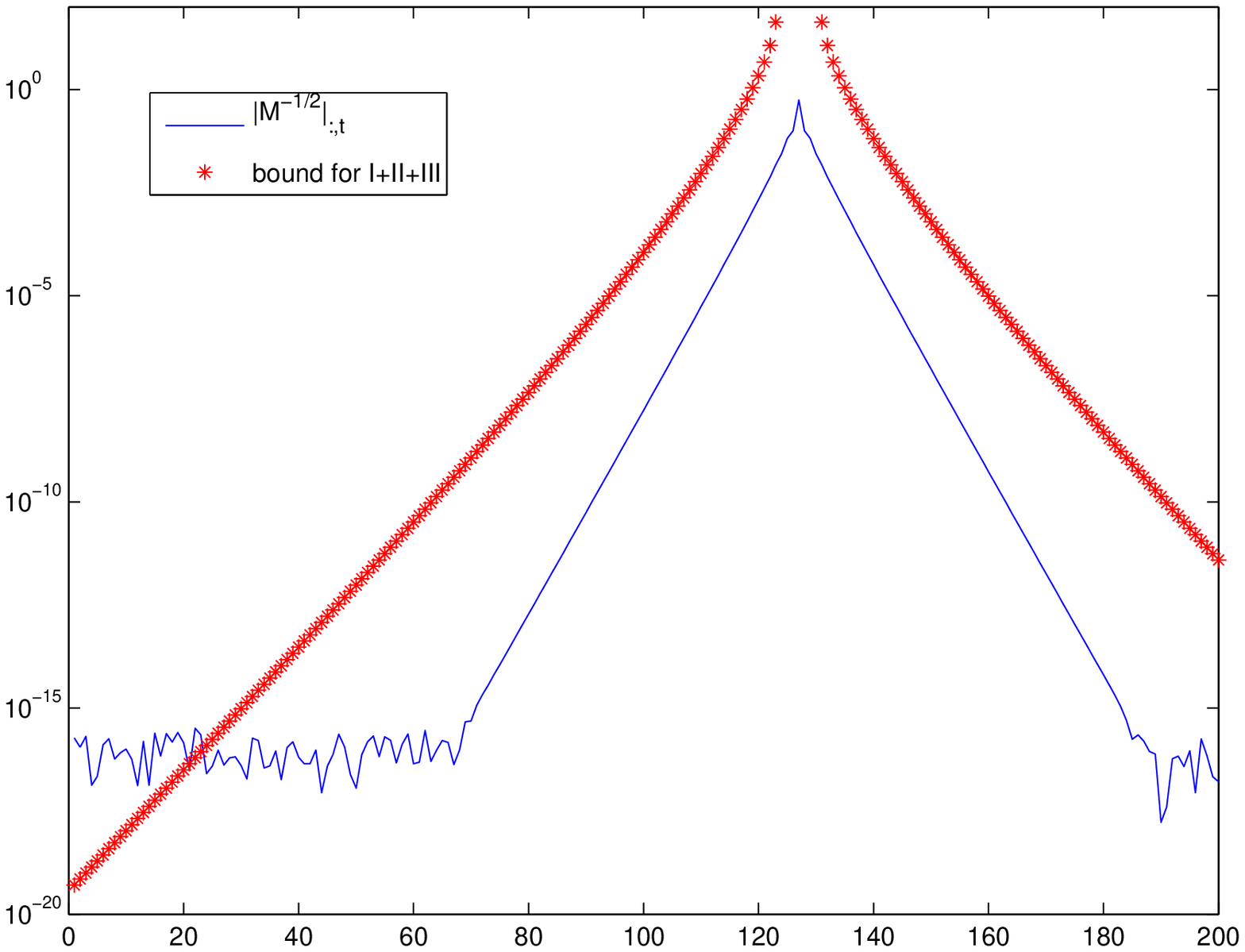}
\caption{Estimates for $|M^{-1/2}|_{:,t}$, $t=127$, using  I+II and the upper bound for III.
Size $n=200$, Log-scale.  Left: $M={\rm tridiag}(-1,4,-1)$.
Right: $M={\rm pentadiag}(-0.5,-1,4,-1,-0.5)$.
\label{fig:LS-1/2}}
\end{figure}

As yet another example, consider the entire function $f(x)=(1-\exp(-x))/x$ for $x\in [0,1]$,
which is a Laplace--Stieltjes function with ${\rm d}\alpha(\tau)={\rm d}\tau $ 
(see section \ref{sec:classes}).
Starting from (\ref{eqn:LSbound}) we can determine 
new terms $I, II$, and estimate $III$ as
it was done for the inverse square root. Due to the small interval size,
the first term $I$ accounts for the whole bound for most choices of $k,t$.
For the same two matrices used above,
the actual (superexponential) decay and its approximation are reported in Figure~\ref{fig:f3}.

\begin{figure}[thb]
\centering
\includegraphics[width=2.5in,height=2.5in]{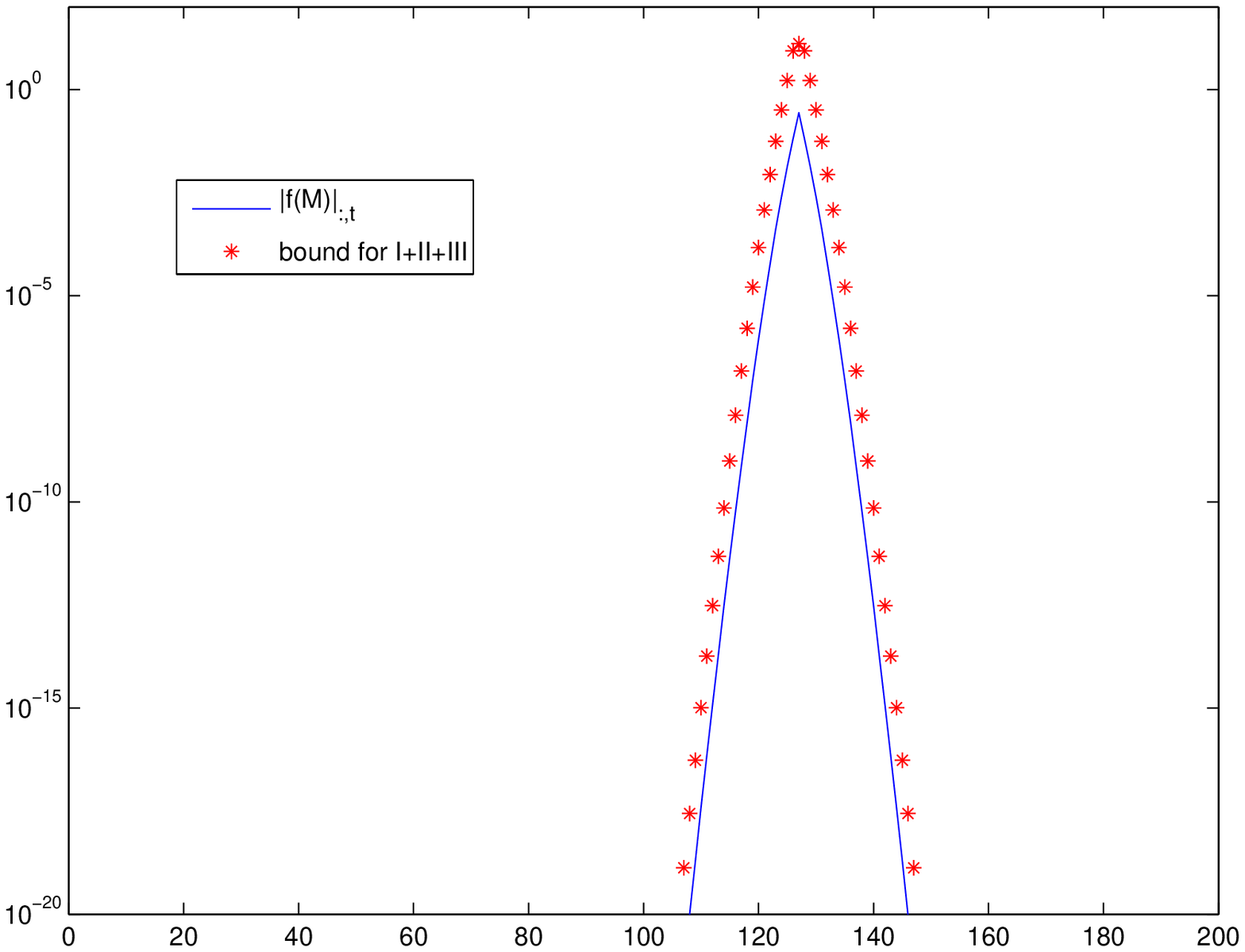}
\includegraphics[width=2.5in,height=2.5in]{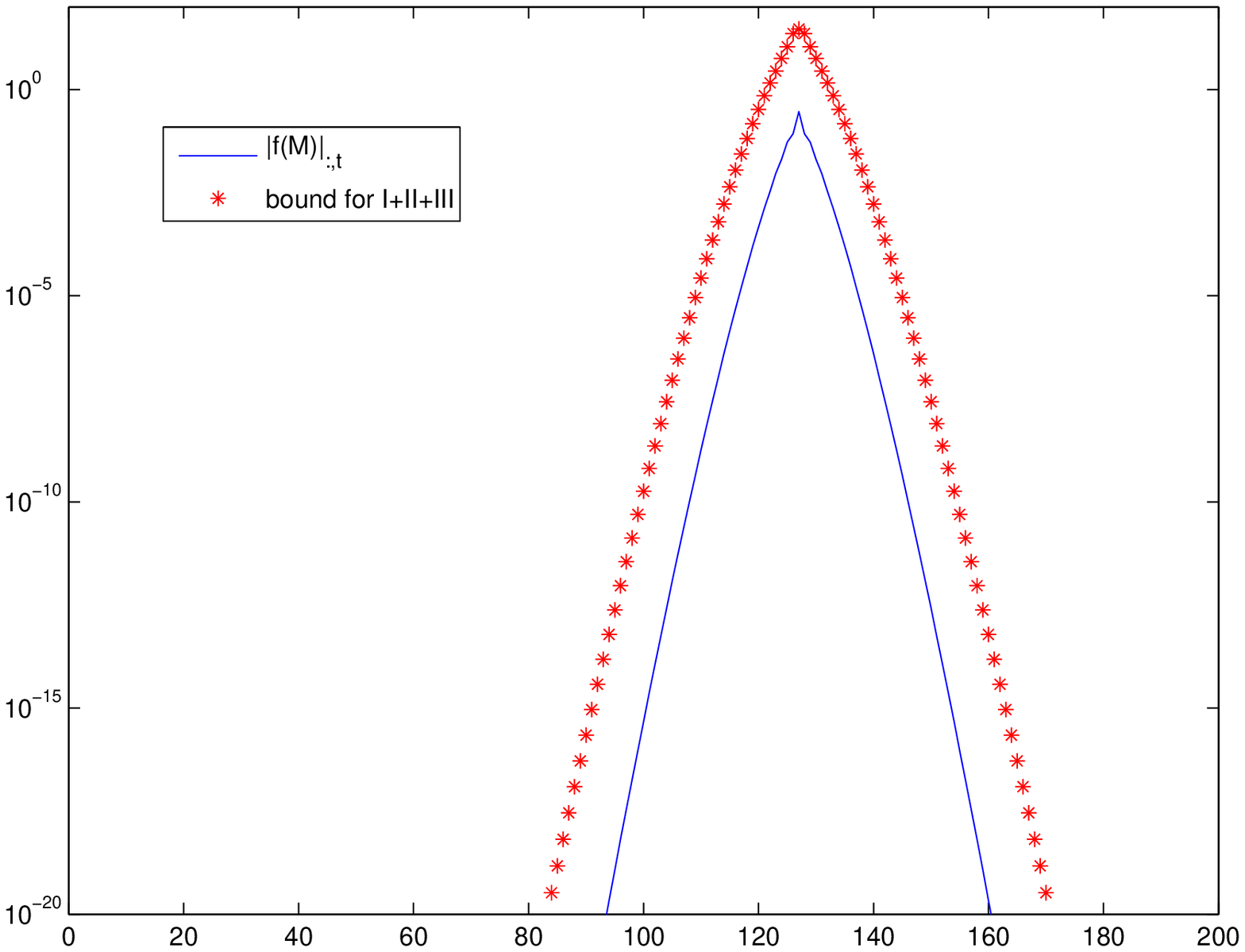}
\caption{Estimates for $|M^{-1}(I-\exp(-M))|_{:,t}$, $t=127$, using  I+II and the upper bound for III.
size $n=200$, Log-scale.  Left: $M={\rm tridiag}(-1,4,-1)$.
Right: $M={\rm pentadiag}(-0.5,-1,4,-1,-0.5)$.
\label{fig:f3}}
\end{figure}

We remark that for the validity of Theorem \ref{th:LS}, we cannot
relax the assumption that $M$ be positive definite. This makes sense
since we are considering functions of $M$ that are defined on
$(0,\infty)$. If $M$ is not positive definite but
$f$ happens to be defined on a larger interval containing the spectrum of $M$,
for instance on all of $\RR$, it may still be possible, in some cases,
to obtain bounds for $f(M)$ 
from the corresponding bounds on $f(M + \delta I)$
where the shifted matrix $M + \delta I$ is positive definite.

\vskip 0.05in
\begin{remark}\label{rem:shiftedM}
We observe that if $f(M+i \zeta I)$ is well defined for $\zeta\in\RR$, then the
estimate (\ref{eqn:LSbound}) also holds for
$|f(M+i \zeta I)|_{k,t}$, since $|\exp(i\zeta)|=1$.
\end{remark}

\subsection{Bounds for Cauchy--Stieltjes functions} 
Bounds for the entries of $f(M)$, where $f$ is a Cauchy--Stieltjes
function and $M=M^*$ is positive definite, can be obtained in a
similar manner, with the bound (\ref{eqn:demko}) of Demko et al.~\cite{DMS}
replacing the bounds on $\exp(-\tau M)$ from Theorem \ref{th:boundexp}.

For a given $\omega \in \Gamma = (\infty, 0)$, let
$\kappa=\kappa(\omega)=(\lambda_{\max}-\omega)/(\lambda_{\min}-\omega)$, 
$q = q(\omega) = (\sqrt{\kappa}-1)/(\sqrt{\kappa}+1)$,
$C=C(-\omega)=\max\{1/(\lambda_{\min}-\omega), C_0\}$, with
$C_0 = C_0(-\omega) = (1+\sqrt{\kappa})^{1/2}/(2(\lambda_{\max}-\omega))$.
We immediately obtain the following result.

\begin{theorem}\label{th:CS}
Let $M=M^*$ be positive definite and let $f$ be a Cauchy--Stieltjes function. 
Then for all $k$ and $t$ we have
\begin{eqnarray}\label{eqn:Markov_general}
|f(M)_{kt}|\le  \int_{-\infty}^0  C(\omega) q(\omega)^{\frac{|k-t|}{\beta}} {\rm d}\omega.
\end{eqnarray}
\end{theorem}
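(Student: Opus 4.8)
The plan is to combine the integral representation of a Cauchy--Stieltjes function with the Demko--Eijkhout--Stieltjes bound (\ref{eqn:demko}), treating the resolvent $(M-\omega I)^{-1}$ as the elementary building block. First I would observe that, since $f$ is a Cauchy--Stieltjes function with $\Gamma=(-\infty,0]$, we may write
\begin{eqnarray*}
f(M) = \int_{-\infty}^0 (M-\omega I)^{-1}\,{\rm d}\gamma(\omega),
\end{eqnarray*}
which is the matrix analogue of (\ref{eqn:markov}); this is legitimate because for $\omega\le 0$ and $M$ positive definite, $M-\omega I$ is Hermitian positive definite, so the resolvent is well defined and bounded, and the integral converges absolutely. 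Passing to the $(k,t)$ entry and applying the triangle inequality for integrals then gives
\begin{eqnarray*}
|f(M)_{kt}| \le \int_{-\infty}^0 |((M-\omega I)^{-1})_{kt}|\,{\rm d}|\gamma|(\omega),
\end{eqnarray*}
where $|\gamma|$ denotes the total variation measure of $\gamma$.

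The core step is then to bound the integrand for each fixed $\omega$. Here I would apply the Demko et al.\ estimate (\ref{eqn:demko}) to the banded HPD matrix $M-\omega I$. Since $M$ is $\beta$-banded and shifting by $-\omega I$ preserves both the bandwidth and Hermitian positive definiteness, the hypotheses of (\ref{eqn:demko}) are met with this matrix in place of $M$. Its extreme eigenvalues are $\lambda_{\max}-\omega$ and $\lambda_{\min}-\omega$, so the condition number is exactly $\kappa(\omega)=(\lambda_{\max}-\omega)/(\lambda_{\min}-\omega)$ and the decay base is $q(\omega)=(\sqrt{\kappa}-1)/(\sqrt{\kappa}+1)$, matching the definitions given just before the theorem. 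Likewise the prefactor specializes to $C(\omega)$ as defined. Thus for each $\omega$ we get $|((M-\omega I)^{-1})_{kt}| \le C(\omega)\,q(\omega)^{|k-t|/\beta}$, and substituting this pointwise bound under the integral yields exactly (\ref{eqn:Markov_general}).

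The main obstacle I anticipate is justifying the interchange of the entrywise absolute value with the integral and, more delicately, confirming that the scalar bound (\ref{eqn:demko}) transfers verbatim to the shifted matrix with the correct identification of constants. One subtlety is the normalization assumption in (\ref{eqn:demko}) that the diagonal entries not exceed one; for $M-\omega I$ with $\omega\to-\infty$ the diagonal grows without bound, so one must track how the rescaling affects $C(\omega)$ and verify that the stated $C(\omega)$ already incorporates this (indeed $C(\omega)=\max\{1/(\lambda_{\min}-\omega),C_0(\omega)\}$ is the correctly unnormalized constant). A second point to check is integrability at $\omega\to-\infty$: there $q(\omega)\to 1$ while $C(\omega)\to 0$, so convergence of the right-hand side is not automatic and rests on the absolute convergence of the defining Cauchy--Stieltjes integral for $f$ together with the decay of $C(\omega)$; I would note this but, since the theorem statement presents (\ref{eqn:Markov_general}) as the bound itself, the estimate holds as an inequality regardless of whether the right-hand side is finite. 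With these checks in place the argument is essentially a one-line application of the classical inverse bound inside the Cauchy--Stieltjes integral.
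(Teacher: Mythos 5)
Your argument is exactly the one the paper intends: it presents Theorem~\ref{th:CS} as an immediate consequence of inserting the Demko--Moss--Smith bound (\ref{eqn:demko}), applied to the shifted $\beta$-banded HPD matrix $M-\omega I$ with the stated $\kappa(\omega)$, $q(\omega)$, $C(\omega)$, into the Cauchy--Stieltjes representation of $f(M)$ and using the triangle inequality under the integral. Your additional remarks on the normalization convention and on writing the right-hand side against $|\gamma|$ rather than ${\rm d}\omega$ are sound refinements of the same proof, not a different route.
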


For specific functions we can be more explicit, and provide more insightful upper bounds 
by evaluating or bounding the integral on the right-hand side of 
(\ref{eqn:Markov_general}).
As an example,
let us consider again $f(x) = x^{-\frac12}$, which happens to be both a Laplace--Stieltjes
and a Cauchy--Stieltjes function. In this case we find the bound
\begin{equation}\label{bound_sqrt}
|M_{kt}^{-\frac12}| \le
\frac 2 {\pi}
(C(0)+C_2)
\left( \frac{ \sqrt{\lambda_{\max}} - \sqrt{\lambda_{\min}}}{
\sqrt{\lambda_{\max}} + \sqrt{\lambda_{\min}}} \right )^{ \frac{|k-t|}{\beta}},
\end{equation}
where $C_2= \max\left\{  1, (1+\frac 1 2 \sqrt{\kappa(0)})^{\frac 1 2}\right\}$.
Indeed, for the given function and upon substituting $\tau=-\omega$, 
(\ref{eqn:Markov_general}) becomes
\begin{eqnarray}
|M_{kt}^{-\frac12}| &\le& \frac{1}{\pi} \int_0^\infty C(\tau) 
\left( \frac{ \sqrt{\lambda_{\max}+\tau} - \sqrt{\lambda_{\min}+\tau}}{%
\sqrt{\lambda_{\max}+\tau} + \sqrt{\lambda_{\min}+\tau}} 
\right )^{ \frac{|k-t|}{\beta}} \frac{1}{\sqrt{\tau}} {\rm d}\tau\\
&\le& \frac{1}{\pi}
\left( \frac{ \sqrt{\lambda_{\max}} - \sqrt{\lambda_{\min}}}{
\sqrt{\lambda_{\max}} + \sqrt{\lambda_{\min}}} \right )^{ \frac{|k-t|}{\beta}}
\int_0^\infty C(\tau) \frac{1}{\sqrt{\tau}} {\rm d}\tau.
\end{eqnarray}
Let $\phi(\tau)$ be the integrand function.
We split the integral as $\int_0^\infty \phi(\tau) {\rm d}\tau =
\int_0^1 \phi(\tau) {\rm d}\tau + \int_1^\infty \phi(\tau) {\rm d}\tau$. 
For the first integral,
we observe that $C(\tau) \le C(0)$, so that
\begin{eqnarray*}
\int_0^1 C(\tau) \frac{1}{\sqrt{\tau}} {\rm d}\tau \le
C(0) \int_0^1  \frac{1}{\sqrt{\tau}} {\rm d}\tau = 2 C(0).
\end{eqnarray*}
For the second integral, we observe that $C(\tau) \le C_2 \frac 1 {\tau}$ where
$C_2= \max\{  1, (1+\sqrt{\kappa(0)})^{1/2}/2\}$, so that
\begin{eqnarray*}
\int_1^\infty C(\tau) \frac{1}{\sqrt{\tau}} {\rm d}\tau \le
C_2 \int_1^\infty  \frac{1}{\tau\sqrt{\tau}} {\rm d}\tau = 2 C_2.
\end{eqnarray*}
Collecting all results the final upper bound (\ref{bound_sqrt}) follows.

We note that for this particular matrix function, using the approach
just presented results in much more explicit bounds than those obtained
earlier using the Laplace--Stieltjes representation, which required
the numerical evaluation of three integrals. Also, since the bound
(\ref{eqn:demko}) is known to be sharp (see \cite{DMS}), it is to be expected
that the bounds (\ref{bound_sqrt}) will be generally better
than those obtained in the previous
section. 
Figure~\ref{fig:CS-1/2} shows the accuracy of the bounds in
(\ref{bound_sqrt}) for the same matrices as in Figure~\ref{fig:LS-1/2},
where the Laplace--Stieltjes bounds were used. For both matrices,
the quality of the Cauchy--Stieltjes bound is clearly superior.

\begin{figure}[thb]
\centering
\includegraphics[width=2.5in,height=2.5in]{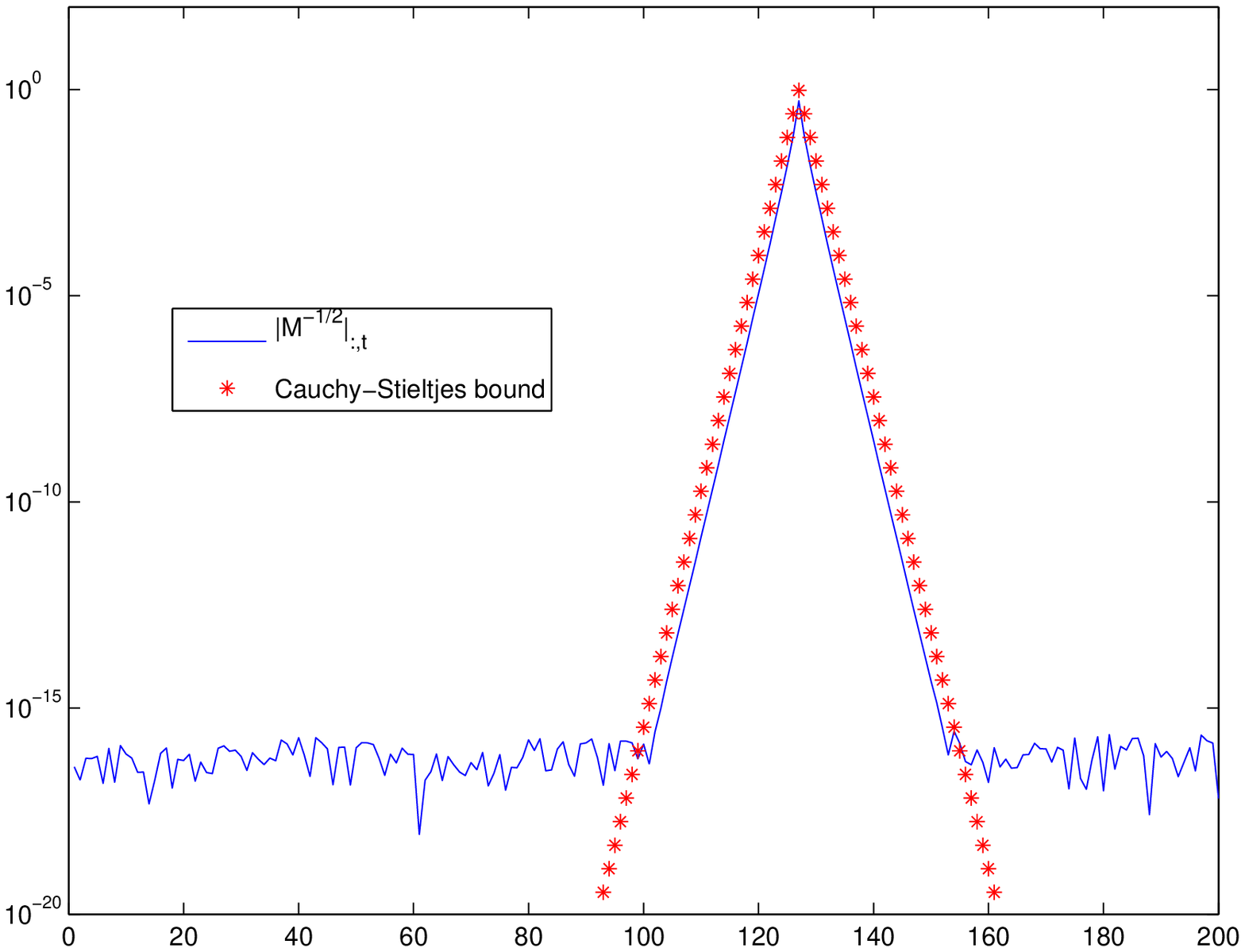}
\includegraphics[width=2.5in,height=2.5in]{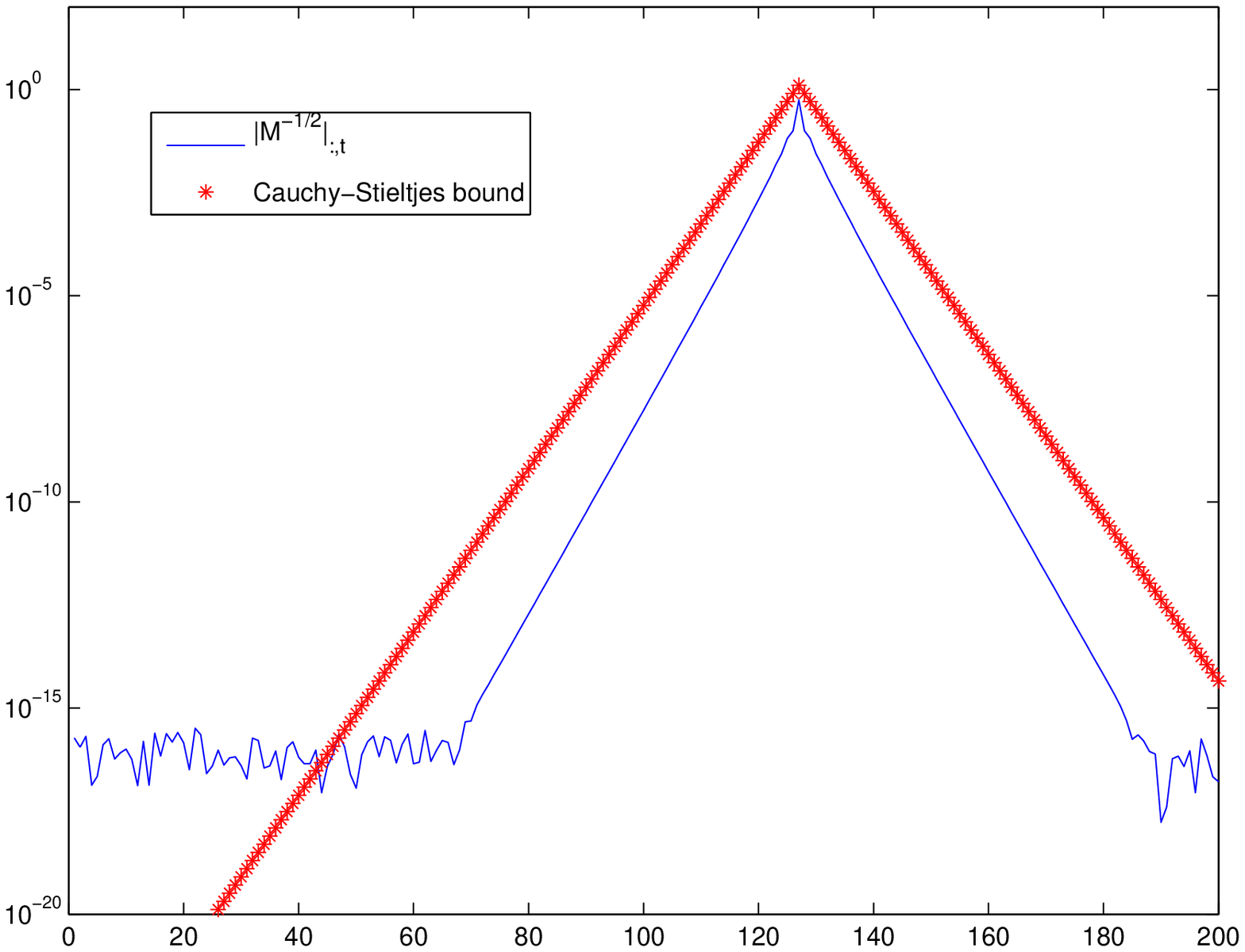}
\caption{Estimates for $|M^{-1/2}|_{:,t}$, $t=127$, using  (\ref{bound_sqrt}),
size $n=200$,  Log-scale.  Left: $M={\rm tridiag}(-1,4,-1)$.
Right: $M={\rm pentadiag}(-0.5,-1,4,-1,-0.5)$.
\label{fig:CS-1/2}}
\end{figure}

We conclude this section with a discussion on decay bounds for functions
of $M- i \zeta I$,
where $\zeta\in\RR$. These estimates may be useful when
the integral is over a complex curve.
We first recall a result of Freund for $(M-i \zeta I)^{-1}$. To this end,
we let again $\lambda_{\min}, \lambda_{\max}$ be the extreme eigenvalues of $M$ 
(assumed to be HPD),
and we let $\lambda_1 = \lambda_{\min} -  i\zeta$,
$\lambda_2 = \lambda_{\max} -  i \zeta$.

\begin{proposition}\label{prop:Freund}
Assume $M$ is Hermitian positive definite and $\beta$-banded. Let
$R>1$ be defined as $R=\alpha + \sqrt{\alpha^2-1}$,
with $\alpha=(|\lambda_1|+|\lambda_2|)/|\lambda_2-\lambda_1|$.
Then for $k\ne t$,
$$
|( M - i\zeta I )^{-1}|_{tk} \le 
C(\zeta)
\left (\frac{1}{R}\right )^{\frac{|t -k|}{\beta}}  \,\, {\rm with}\,\,
C(\zeta) = \frac{2R}{|\lambda_1-\lambda_2|} \frac{4R^2}{(R^2-1)^2} .
$$
\end{proposition}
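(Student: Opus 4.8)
The plan is to view $(M - i\zeta I)^{-1}$ as $g(N)$ for the normal matrix $N := M - i\zeta I$ and the function $g(w) = 1/w$, and then run the polynomial-approximation argument that underlies bounds such as (\ref{eqn:BG}). First I would record that $N$ is normal: since $N^* = M + i\zeta I$ commutes with $M$, one has $N N^* = M^2 + \zeta^2 I = N^* N$, so $N$ is unitarily diagonalizable with spectrum $\sigma(N) = \{\lambda_j - i\zeta\}$ lying on the segment joining $\lambda_1 = \lambda_{\min} - i\zeta$ and $\lambda_2 = \lambda_{\max}-i\zeta$. For any polynomial $p$ of degree at most $m-1$, the matrix $p(N)$ is a polynomial of the same degree in $M$, hence $(m-1)\beta$-banded; consequently, whenever $|t-k| > (m-1)\beta$ the $(t,k)$ entry of $p(N)$ vanishes, and by normality
\[
|(N^{-1})_{tk}| = |(N^{-1} - p(N))_{tk}| \le \|N^{-1} - p(N)\|_2 = \max_{\lambda \in \sigma(N)}\left| \frac{1}{\lambda} - p(\lambda)\right| \le \max_{w \in [\lambda_1,\lambda_2]}\left| \frac{1}{w} - p(w)\right|.
\]

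Next I would choose $p$ to be a good polynomial approximation to $g(w) = 1/w$ on the segment $[\lambda_1,\lambda_2]$. Since $g$ is analytic everywhere except at the origin, the rate of geometric approximation is governed by the Bernstein ellipse with foci $\lambda_1,\lambda_2$ passing through the pole $w=0$. Mapping $[\lambda_1,\lambda_2]$ affinely onto $[-1,1]$ sends $0$ to a point $a$ with $|a-1|+|a+1| = 2(|\lambda_1|+|\lambda_2|)/|\lambda_2-\lambda_1| = 2\alpha$; this is precisely the major-axis length $R + R^{-1}$ of the limiting ellipse, which recovers the stated definition $R + R^{-1} = 2\alpha$, i.e. $R = \alpha + \sqrt{\alpha^2-1}$. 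Expanding $1/w$ in Chebyshev polynomials on the segment, the coefficients decay like $R^{-k}$, so truncating at degree $m-1$ yields a uniform error on $[\lambda_1,\lambda_2]$ of order $R^{-m}$. Taking $m = \lceil |t-k|/\beta\rceil$, so that $(m-1)\beta < |t-k|$, then produces the factor $(1/R)^{|t-k|/\beta}$.

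Finally, tracking the constants would produce $C(\zeta)$. The affine change of variables contributes the factor $2/|\lambda_2-\lambda_1|$ coming from $1/w = \tfrac{2}{\lambda_2-\lambda_1}\,\tfrac{1}{s-a}$, while the Chebyshev coefficients of $1/(s-a)$ have modulus $\tfrac{2}{|\sqrt{a^2-1}|}R^{-k}$; using $|\sqrt{a^2-1}| \ge (R^2-1)/(2R)$ and summing the geometric tail $\sum_{k\ge m} R^{-k}$ gives a prefactor that is a ratio of powers of $R$ and of $R^2-1$. I expect the main obstacle to be exactly this constant: the crude coefficient-plus-triangle-inequality estimate overshoots the stated $\tfrac{2R}{|\lambda_1-\lambda_2|}\tfrac{4R^2}{(R^2-1)^2}$ by a factor of order $R/(R+1)$, so the sharp form requires either a residue-based remainder representation (writing $1/w - p(w)$ as a contour integral over an intermediate ellipse $E_r$, $1 < r < R$, and optimizing in $r$) or an exact evaluation of the truncated-series remainder in place of the triangle-inequality bound. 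The normality of $N$, which converts the entrywise estimate into a clean spectral-norm estimate, is what keeps the remainder of the argument routine.
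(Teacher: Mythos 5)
The paper does not actually prove Proposition~\ref{prop:Freund}: it is ``recalled'' from Freund \cite{Freund1989a}, so there is no internal proof to compare against. Your reconstruction is nonetheless the right one, and it is essentially the argument underlying Freund's bound (and the bounds (\ref{eqn:demko}) and (\ref{eqn:BG}) quoted earlier in the paper): normality of $N=M-i\zeta I$ reduces the entrywise estimate to uniform polynomial approximation of $1/w$ on the segment $[\lambda_1,\lambda_2]$; bandedness forces $(p(N))_{tk}=0$ for $\deg p\le \lceil |t-k|/\beta\rceil-1$; and the Chebyshev expansion of $1/(s-a)$ after the affine map gives the geometric rate with the correct parameter, your identification $|a-1|+|a+1|=2\alpha=R+R^{-1}$ being exactly how $R=\alpha+\sqrt{\alpha^2-1}$ arises. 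The only place you fall short of the literal statement is the constant: the coefficient-modulus-plus-geometric-tail estimate yields $\frac{2}{|\lambda_2-\lambda_1|}\,\frac{4R^2}{(R^2-1)(R-1)}$, which exceeds the stated $C(\zeta)=\frac{2R}{|\lambda_2-\lambda_1|}\,\frac{4R^2}{(R^2-1)^2}$ by the factor $(R+1)/R\in(1,2)$ (note the direction: your crude bound is the \emph{larger} one). You diagnose this correctly; Freund obtains the sharper constant by a more careful evaluation of the remainder of the near-best approximant rather than a termwise triangle inequality. Since the proposition is used in this paper only inside integral estimates such as (\ref{eqn:shiftedM_CS}), where a constant factor bounded by $2$ is immaterial, your argument establishes everything that is actually needed; to reproduce $C(\zeta)$ exactly you would either carry out the sharper remainder analysis you sketch at the end or simply cite \cite{Freund1989a} for the constant, which is what the authors do.
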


With this bound, we can modify (\ref{eqn:Markov_general}) so as to
handle more general matrices as follows.
Once again, we let $\lambda_{\min}, \lambda_{\max}$ be the extreme eigenvalues of $M$,
and now we let $\lambda_1 = \lambda_{\min} -  i\zeta - \omega$,
$\lambda_2 = \lambda_{\max} -  i \zeta - \omega$; $\alpha$ and $R$ are defined 
accordingly.
\begin{eqnarray}\label{eqn:shiftedM_CS}
|f(M-i\zeta I)|_{kt} \le \int_{-\infty}^0 C 
\left (\frac{1}{R}\right )^{\frac{|k -t|}{\beta}} {\rm d}\gamma(\omega), \quad k\ne t .
\quad
\end{eqnarray}
Since $R=R(\zeta,\omega)$ is defined in terms of 
spectral information of the shifted matrix $M-i\zeta I - \omega I$, 
we also obtain
$C=C(\zeta,\omega) = 
\frac{2R(\zeta,\omega)}{|\lambda_{\max}-\lambda_{\min}|} \frac{4R(\zeta,\omega)^2}{(R(\zeta,\omega)^2-1)^2}$.


\section{Extensions to more general sparse matrices}  \label{sec:ext}
Although all our main results so far have been stated for matrices that
are banded, it is possible to extend the previous bounds to functions of matrices
with general sparsity patterns.

Following the approach in \cite{CEPD06} and \cite{Benzi2007}, 
let $G=(V,E)$ be the undirected graph
describing the nonzero pattern of $M$. Here $V$ is a set of $n$ vertices
(one for each row/column of $M$) and $E$ is a set of edges. The set $E\subseteq
V\times V$ is defined as follows: there is
an edge $(i,j)\in E$ if and only if $M_{ij}\ne 0$ (equivalently, $M_{ji}\ne 0$
since $M=M^*$). Given any two nodes $i$
and $j$ in $V$, a {\em path of length $k$} between $i$ and $j$ is a sequence of nodes
$i_0=i,i_1,i_2,\ldots ,i_{k-1},i_k=j$ such that $(i_{\ell},i_{\ell +1})\in E$
for all $\ell = 0,1,\ldots ,k-1$ and $i_{\ell}\ne i_m$ for $\ell \ne m$.
If $G$ is connected (equivalently, if $M$ is irreducible, which we will
assume to be the case), then there exists a path between any two nodes
$i,j\in V$. The {\em geodesic distance} $d(i,j)$ between two nodes
$i,j\in G$ is then the length of
the shortest path joining $i$ and $j$. With this distance, $(G,d)$ is a metric 
space. 

We can then extend every one of the bounds seen so far for banded $M$
to a general sparse matrix $M=M^*$ simply
by systematically
replacing the quantity $\frac{|k-t|}{\beta}$ by the geodesic distance $d(k,t)$. 
Hence, the decay in the entries of $f(M)$ is to be understood in terms of
distance from the nonzero pattern of $M$, rather than away from the main
diagonal. 
We refer again to \cite{Benzi2007} for details.
We note that this extension easily carries over to the bounds presented in the following
section.

Finally, we observe that all the results in this paper apply
to the case where $M$ is an infinite matrix with bounded spectrum, 
provided that $f$ has no singularities
on an open neighborhood of the spectral interval $[\lambda_{\min}, \lambda_{\max}]$.
This implies that our bounds apply to all the $n\times n$ principal
submatrices (``finite sections") of such matrices, and that the bounds
are uniform in $n$ as $n\to \infty$. 

\section{Estimates for functions of Kronecker sums of matrices}\label{sec:Kron}
The decay pattern for matrices with Kronecker structure has a rich
structure. In addition to a decay away from the diagonal, which
depends on the matrix bandwidth, a ``local'' decay can
be observed within the bandwidth; see Figure \ref{fig:DecKron}. This particular pattern was
described for $f(x)=x^{-1}$ in \cite{CanutoSimonciniVeraniLAA.14}; here
we largely expand on the class of functions for which the phenomenon
can be described. 

\begin{figure}[thb]
\centering
\includegraphics[width=2.5in,height=2.5in]{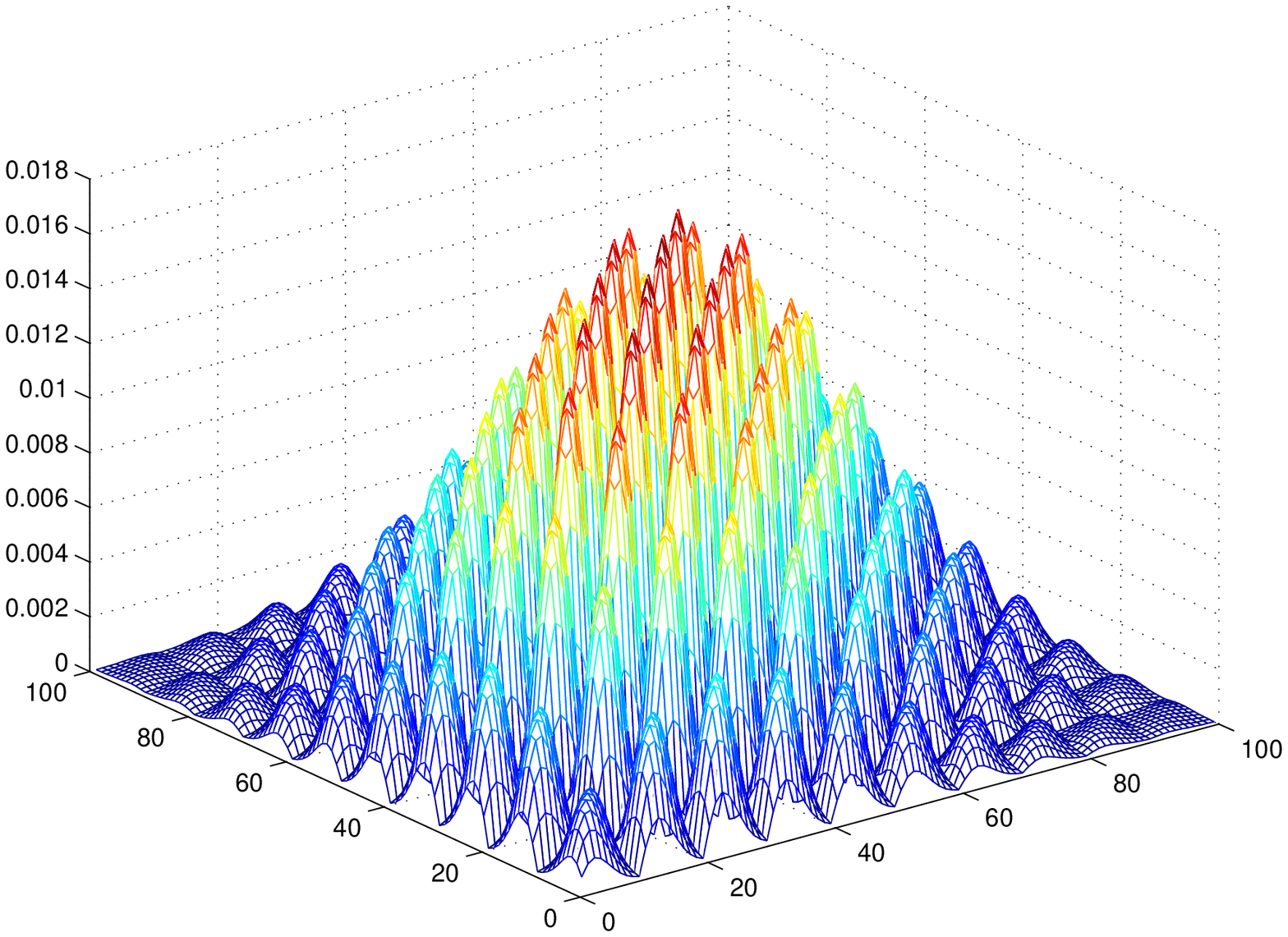}
\includegraphics[width=2.5in,height=2.5in]{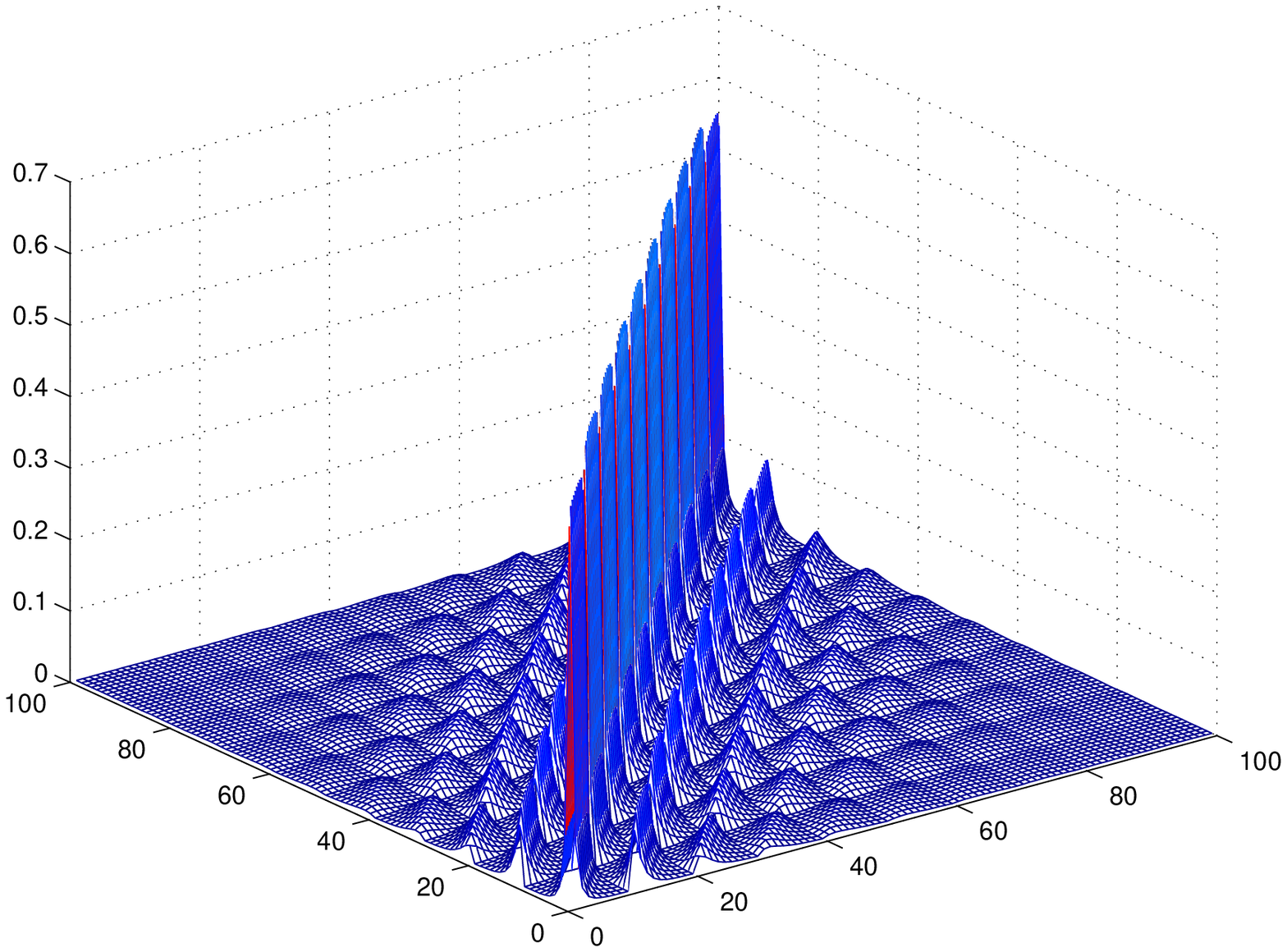}
\caption{
Three-dimensional decay plots for $[f({\cal A})]_{ij}$
where $\cal A$ is the 5-point finite difference 
discretization of the negative Laplacian on the unit square on a 
$10\times 10$ uniform grid with zero Dirichlet boundary conditions.
Left: $f({\cal A}) = \exp(-5 {\cal A})$.
Right: $f({\cal A}) = {\cal A}^{-1/2}$. 
\label{fig:DecKron}}
\end{figure}

\vskip 0.03in

Some matrix functions enjoy properties that make their
application to Kronecker sums of matrices particularly simple. This is the
case for instance  of the exponential and certain trigonometric
functions like $\sin(x)$ and $\cos(x)$.
For these, bounds for their entries can be directly obtained from
the estimates of the previous sections.

\subsection{The exponential function} 
Recall the relation (\ref{eqn:exp_kron}),
which implies that
\begin{equation} \label{eqn:kron_tau}
\exp(-\tau {\cal A}) = \exp(-\tau M)\otimes  \exp(-\tau M), \quad \tau\in \RR 
\end{equation}
when ${\cal A} = M\otimes I + I \otimes M$.
Here and in the following, a lexicographic ordering of
the entries will be used, so that each row or column index $k$ of
${\cal A}$
corresponds to the pair $k=(k_1,k_2)$ in the two-dimensional Cartesian grid.
Furthermore, for any fixed values of $\tau, \rho, \beta >0$, define
\begin{equation}\label{eqn:Phi}
\Phi (i,j) := 
\left\{\begin{array}{ll} 10\exp \left (-\frac{(|i-j|/\beta)^2}{5\rho \tau}\right ), 
& \textnormal{for}\quad \sqrt{4\rho\tau} \le \frac{|i-j|}{\beta} \le 2\rho \tau,\\ 
10\frac{\exp(-\rho\tau)}{\rho \tau} \left ( \frac{{\rm e}\rho\tau}{\frac{|i-j|}{\beta}}
\right )^{\frac{|i-j|}{\beta}} , &
\textnormal{for}\quad \frac{|i-j|}{\beta} \ge 2\rho\tau .
\end{array}\right .
\end{equation}

Note that $\Phi(i,j)$ is only defined for $|i-j| > \sqrt{4\rho \tau} \beta$.
With these notations, the following bounds can be obtained. 

\begin{theorem}\label{th:exp}
Let $M$ be Hermitian and positive semidefinite with bandwidth $\beta$ 
 and spectrum contained in $[0,4\rho]$,
and let ${\cal A}=I\otimes M + M \otimes I$.
Then
$$
(\exp(-\tau {\cal A}))_{kt} = (\exp(-\tau M))_{k_1 t_1} (\exp(-\tau M))_{k_2 t_2} .
$$
Therefore, 
for $\tau > 0$ 
$$|(\exp(-\tau {\cal A}))_{kt}| \le \Phi(k_1,t_1) \Phi (k_2,t_2)$$
for all $t=(t_1,t_2)$ and $k=(k_1,k_2)$ with
$\min \{|t_1-k_1|, |t_2-k_2|\}\ge \sqrt{4\rho\tau} \beta$.
\end{theorem}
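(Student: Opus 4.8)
The plan is to derive both assertions almost immediately from the Kronecker-sum exponential identity (\ref{eqn:exp_kron}) combined with the per-entry bound already established in Theorem \ref{th:boundexp}, so that no genuinely new estimate is required. First I would observe that, since ${\cal A} = M\otimes I + I\otimes M = M\oplus M$ and the Kronecker sum is linear in each argument, we have $-\tau {\cal A} = (-\tau M)\oplus(-\tau M)$; applying (\ref{eqn:exp_kron}) then yields $\exp(-\tau {\cal A}) = \exp(-\tau M)\otimes \exp(-\tau M)$, which is exactly (\ref{eqn:kron_tau}). Reading off a single entry under the lexicographic ordering, in which a global index $k$ corresponds to a pair $(k_1,k_2)$ and $(A\otimes B)_{kt}=A_{k_1t_1}B_{k_2t_2}$, immediately gives the product formula $(\exp(-\tau {\cal A}))_{kt}=(\exp(-\tau M))_{k_1t_1}(\exp(-\tau M))_{k_2t_2}$, establishing the first claim.

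For the bound I would take moduli in the product formula, so that
$$|(\exp(-\tau {\cal A}))_{kt}| = |(\exp(-\tau M))_{k_1t_1}|\,|(\exp(-\tau M))_{k_2t_2}|,$$
and then estimate each factor separately using Theorem \ref{th:boundexp}. The essential point is that the \emph{same} $\beta$-banded Hermitian positive semidefinite matrix $M$, with spectrum in $[0,4\rho]$, occupies both tensor slots; consequently the single function $\Phi$ defined in (\ref{eqn:Phi})---which is precisely the piecewise bound of Theorem \ref{th:boundexp}, splicing case i) on the finite interval $\sqrt{4\rho\tau}\le |i-j|/\beta\le 2\rho\tau$ together with the superexponential case ii) for $|i-j|/\beta\ge 2\rho\tau$---serves as an upper bound for each factor, \emph{provided} the corresponding off-diagonal distance is at least $\sqrt{4\rho\tau}\,\beta$.

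This threshold is exactly where the hypothesis $\min\{|t_1-k_1|,|t_2-k_2|\}\ge \sqrt{4\rho\tau}\,\beta$ enters, and it is the only subtle bookkeeping point in the argument: $\Phi$ is defined only for arguments whose scaled distance exceeds $\sqrt{4\rho\tau}$, and Theorem \ref{th:boundexp} delivers a bound only in that regime. Requiring the \emph{minimum} of the two distances to meet the threshold guarantees that $|t_1-k_1|$ and $|t_2-k_2|$ qualify \emph{simultaneously}, so that $|(\exp(-\tau M))_{k_1t_1}|\le \Phi(k_1,t_1)$ and $|(\exp(-\tau M))_{k_2t_2}|\le \Phi(k_2,t_2)$ hold together; multiplying these two inequalities then yields $|(\exp(-\tau {\cal A}))_{kt}|\le \Phi(k_1,t_1)\Phi(k_2,t_2)$. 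I do not anticipate any real obstacle beyond tracking the index-pair convention and verifying that the distance condition is respected in each slot; the Gaussian/superexponential dichotomy is inherited verbatim from Theorem \ref{th:boundexp} and needs no fresh analysis.
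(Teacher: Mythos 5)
Your proposal is correct and follows essentially the same route as the paper: factor $\exp(-\tau{\cal A})$ via the Kronecker-sum identity (\ref{eqn:exp_kron}), read off the $(k,t)$ entry as a product of two entries of $\exp(-\tau M)$ under the lexicographic index pairing (the paper spells this out with the ${\rm vec}$ identity applied to $e_t={\rm vec}(e_{t_1}e_{t_2}^T)$, using Hermitianness of $M$ to reconcile the resulting transposed index in the second factor, whereas you invoke the standard entrywise formula directly), and then bound each factor by $\Phi$ from Theorem \ref{th:boundexp}, with the hypothesis on $\min\{|t_1-k_1|,|t_2-k_2|\}$ ensuring both factors lie in the regime where $\Phi$ is defined.
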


\begin{proof}
Using (\ref{eqn:exp_kron}) we obtain
$$
e_k^T \exp(-{\tau \cal A}) e_t = e_k^T \exp(-\tau M)\otimes \exp(-\tau M) e_t.
$$
Let $E_{t_1 t_2}$ be the $n\times n$ matrix such that $e_t={\rm vec}(E_{t_1 t_2})
\in\RR^{n^2}$, and
in particular $E_{t_1 t_2} = e_{t_1} e_{t_2}^T$, with $e_{t_1}, e_{t_2} \in\RR^n$. Then
\begin{eqnarray*}
e_k^T \exp(-\tau M)\otimes \exp(-\tau M) e_t &=& 
e_k^T {\rm vec}( \exp(-\tau M) E_{t_1 t_2} \exp(-\tau M)^*) \\
&=&e_k^T {\rm vec}( \exp(-\tau M) e_{t_1} e_{t_2}^T \exp(-\tau M)^*) \\
&=& 
e_k^T 
\begin{bmatrix}
\exp(-\tau M) e_{t_1} (e_{t_2}^T \exp(-\tau M)^*)e_1 \\
\exp(-\tau M) e_{t_1} (e_{t_2}^T \exp(-\tau M)^*)e_2 \\
\vdots \\
\exp(-\tau M) e_{t_1} (e_{t_2}^T \exp(-\tau M)^*)e_n 
\end{bmatrix} \\
&=& e_{k_1}^T \exp(-\tau M) e_{t_1} (e_{t_2}^T \exp(-\tau M)^*)e_{k_2}) ,
\end{eqnarray*}
which proves the first relation for $M$ Hermitian. 
For the bound, it is sufficient to use (\ref{eqn:Phi}) to obtain
the desired conclusion.
\end{proof}

The result can be easily generalized to a broader class of matrices.

\begin{corollary}
Let ${\cal A}=I\otimes M_1 + M_2 \otimes I$ with $M_1$ and $M_2$ having bandwidth
$\beta_1$ and $\beta_2$, respectively. Also, let the spectrum of $M_1$ be
contained in the interval $[0,4\rho_1]$ and that of $M_2$ in the
interval $[0,4\rho_2]$, with $\rho_1,\rho_2 > 0$.  Then 
for $t=(t_1,t_2)$ and $k=(k_1,k_2)$,
with
$|t_\ell-k_\ell|\ge \sqrt{4\rho_\ell\tau} \beta_\ell$, $\ell=1,2$,
$$
(\exp(-\tau {\cal A}))_{kt} = (\exp(-\tau M_1))_{k_1 t_1} (\exp(-\tau M_2))_{t_2 k_2} .
$$
Therefore, 
$$
|(\exp(-\tau {\cal A}))_{kt}| \le 
\Phi_1 (k_1,t_1) \Phi_2(k_2,t_2)
$$
where $\Phi_k (i,j)$ is defined as $\Phi (i,j)$ in (\ref{eqn:Phi}) with
$\rho_\ell$, $\beta_\ell$ replacing $\rho$, $\beta$.
\end{corollary}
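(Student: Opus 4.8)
The plan is to mimic the proof of Theorem~\ref{th:exp} essentially verbatim, replacing the symmetric Kronecker factorization by the asymmetric one dictated by the two different matrices $M_1$ and $M_2$. The starting point is the Kronecker-sum exponential identity (\ref{eqn:exp_kron}), which for ${\cal A}=I\otimes M_1 + M_2\otimes I$ yields
$$
\exp(-\tau{\cal A}) = \exp(-\tau M_2)\otimes\exp(-\tau M_1).
$$
First I would establish the entrywise factorization by the same vectorization argument used in Theorem~\ref{th:exp}: writing $e_t={\rm vec}(E_{t_1 t_2})$ with $E_{t_1 t_2}=e_{t_1}e_{t_2}^T$ and using the standard identity $(B\otimes A)\,{\rm vec}(X)={\rm vec}(AXB^T)$, one peels off the two factors and arrives at
$$
(\exp(-\tau{\cal A}))_{kt} = (\exp(-\tau M_1))_{k_1 t_1}\,(\exp(-\tau M_2))_{t_2 k_2},
$$
where the roles of $M_1$ and $M_2$ and the order of the subscripts track the position of each matrix in the tensor product. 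The only care needed here is bookkeeping: which matrix sits in the left slot of the Kronecker product and how the lexicographic index $k=(k_1,k_2)$ decomposes.

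Having the exact factorization, the bound is immediate. Because each factor is a single entry of the exponential of a banded Hermitian positive semidefinite matrix, I would apply Theorem~\ref{th:boundexp} separately to $M_1$ (with parameters $\rho_1,\beta_1$) and to $M_2$ (with parameters $\rho_2,\beta_2$). Under the stated hypotheses $|t_\ell-k_\ell|\ge\sqrt{4\rho_\ell\tau}\,\beta_\ell$ for $\ell=1,2$, each off-diagonal index pair lands in one of the two admissible regimes of Theorem~\ref{th:boundexp}, so by definition (\ref{eqn:Phi}) of $\Phi$ (with the appropriate subscripted parameters) we have
$$
|(\exp(-\tau M_1))_{k_1 t_1}|\le\Phi_1(k_1,t_1),
\qquad
|(\exp(-\tau M_2))_{t_2 k_2}|\le\Phi_2(k_2,t_2).
$$
Multiplying these two inequalities and using $|(\exp(-\tau{\cal A}))_{kt}|=|(\exp(-\tau M_1))_{k_1 t_1}|\,|(\exp(-\tau M_2))_{t_2 k_2}|$ gives the claimed product bound $\Phi_1(k_1,t_1)\,\Phi_2(k_2,t_2)$.

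There is no genuine obstacle here; the result is a direct corollary and the argument is almost purely notational. The one point requiring attention—what I would call the \emph{main} (albeit minor) source of possible error—is the index/order bookkeeping in the factorization: getting the assignment of $(M_1,M_2)$ to the two Kronecker slots right, and correctly matching $(\exp(-\tau M_2))$ to the subscript pair $(t_2,k_2)$ rather than $(k_2,t_2)$. Since each $M_\ell$ is Hermitian, the two orderings differ only by conjugation and hence have equal modulus, so the magnitude bound is unaffected regardless; this is why the region hypothesis is stated symmetrically as $|t_\ell-k_\ell|$ and the parameters simply carry the subscript $\ell$ through the definition of $\Phi_\ell$. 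Once the factorization is written down correctly, applying Theorem~\ref{th:boundexp} twice and taking the product completes the proof.
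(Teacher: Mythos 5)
Your proposal is correct and follows essentially the same route as the paper: the paper states this corollary without a separate proof, remarking only that it is an easy generalization of Theorem~\ref{th:exp}, and your argument is exactly that generalization (the same vectorization identity applied to $\exp(-\tau M_2)\otimes\exp(-\tau M_1)$, followed by two applications of Theorem~\ref{th:boundexp} with the subscripted parameters). Your explicit remark that the $(t_2,k_2)$ versus $(k_2,t_2)$ ordering is immaterial in modulus because $M_2$ is Hermitian correctly resolves the only notational subtlety.
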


Generalization to the case of Kronecker sums of more
than two matrices is relatively straightforward.
Consider for example the case of three summands.
A lexicographic order of
the entries is again used, so that each row or column index $k$ of
${\cal A} = M\otimes I\otimes I + I\otimes M\otimes I + I\otimes I\otimes M$ 
corresponds to a triplet $k=(k_1,k_2,k_3)$ in the three-dimensional Cartesian grid.

\begin{corollary}
Let $M$ be $\beta$-banded, Hermitian and with spectrum contained 
in $[0,4\rho]$, and
let ${\cal A} = 
M\otimes I\otimes I +
I\otimes M\otimes I +
I\otimes I\otimes M$ and $k = (k_1, k_2, k_3)$ and $t=(t_1,t_2,t_3)$. 
Then
$$
(\exp(-\tau {\cal A}))_{kt} =
(\exp(-\tau M))_{k_1,t_1} (\exp(-\tau M))_{t_2,k_2}  (\exp(-\tau M))_{t_3,k_3}  ,
$$
from which it follows
$$
|(\exp(-\tau {\cal A}) )_{kt}| 
\le \Phi(k_1,t_1)\Phi(k_2,t_2)\Phi(k_3,t_3),
$$
for all $(k_1,t_1), (k_2,t_2), (k_3,t_3)$
with $\min \{|k_1-t_1|,|k_2-t_2|,|k_3-t_3|\} > \sqrt{4\tau \rho}\beta$.
\end{corollary}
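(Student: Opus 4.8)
The plan is to reduce the three-summand case to the machinery already developed for the two-summand exponential in Theorem \ref{th:exp}, since no new analytic ingredient is needed beyond Theorem \ref{th:boundexp}. First I would establish the three-factor analogue of the identity (\ref{eqn:exp_kron}), namely
$$
\exp(-\tau {\cal A}) = \exp(-\tau M)\otimes \exp(-\tau M)\otimes \exp(-\tau M).
$$
To see this, I would group the Kronecker sum as ${\cal A} = M \oplus N$ with $N = M\oplus M = M\otimes I + I\otimes M$, and check by direct expansion that $M\otimes I_{n^2} + I_n\otimes N$ indeed equals $M\otimes I\otimes I + I\otimes M\otimes I + I\otimes I\otimes M$. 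Applying (\ref{eqn:exp_kron}) to the pair $(-\tau M, -\tau N)$ gives $\exp(-\tau {\cal A}) = \exp(-\tau M)\otimes \exp(-\tau N)$, and a second application of (\ref{eqn:exp_kron}) to $N$ yields $\exp(-\tau N) = \exp(-\tau M)\otimes \exp(-\tau M)$, producing the triple product above.

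Next I would extract the $(k,t)$ entry. The key fact is the entrywise rule for a triple Kronecker product: under the lexicographic ordering $k=(k_1,k_2,k_3)$, $t=(t_1,t_2,t_3)$, one has $(A\otimes B\otimes C)_{kt} = A_{k_1 t_1}\,B_{k_2 t_2}\,C_{k_3 t_3}$. This can be obtained either by invoking the standard identity directly, or by iterating the $\vect$-based computation carried out in the proof of Theorem \ref{th:exp}, first peeling off the outermost factor $\exp(-\tau M)$ against $N$ and then applying the two-summand entry formula to the remaining block. Specializing to $A=B=C=\exp(-\tau M)$ gives the asserted scalar product formula; the transposition of indices appearing in some factors is immaterial because $M$ is Hermitian and only moduli enter the subsequent bound.

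Finally, for the estimate I would take absolute values and bound each of the three scalar factors independently by Theorem \ref{th:boundexp}, giving $|(\exp(-\tau M))_{k_\ell t_\ell}| \le \Phi(k_\ell, t_\ell)$ for $\ell=1,2,3$. Since $\Phi(i,j)$ is defined only for $|i-j|/\beta \ge \sqrt{4\rho\tau}$, the hypothesis $\min\{|k_1-t_1|,|k_2-t_2|,|k_3-t_3|\} > \sqrt{4\tau\rho}\,\beta$ is exactly what guarantees that all three index pairs lie in the admissible range simultaneously. Multiplying the three bounds then yields $|(\exp(-\tau {\cal A}))_{kt}| \le \Phi(k_1,t_1)\Phi(k_2,t_2)\Phi(k_3,t_3)$. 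The only genuine obstacle is bookkeeping: keeping the multi-index correspondence consistent when passing from two to three tensor levels and tracking which Kronecker factor is paired with which index block. Once the factorization and the entrywise Kronecker rule are in place, the bound is an immediate product of three scalar estimates, so the argument closes without further work.
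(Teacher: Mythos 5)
Your proposal is correct and follows essentially the same route as the paper: the same grouping ${\cal A}=M\otimes(I\otimes I)+I\otimes(M\otimes I+I\otimes M)$, a double application of the exponential Kronecker-sum identity to obtain the triple tensor product, entry extraction via the (iterated) $\vect$-based computation from Theorem~\ref{th:exp}, and a factor-by-factor application of Theorem~\ref{th:boundexp} under the stated index restriction. Your remark that the transposed indices are harmless because $M$ is Hermitian (so only moduli matter) is exactly the point the paper leaves implicit.
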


\begin{proof}
We write
${\cal A} = 
M\otimes (I\otimes I) +
I\otimes (M\otimes I + I\otimes M )$, 
so that
\begin{eqnarray*}
\exp(-\tau {\cal A}) &=& \exp(-\tau M) \otimes \exp(-\tau M\otimes I + I\otimes (-\tau M) ) \\
&=&
\exp(-\tau M) \otimes \exp(-\tau M) \otimes \exp(-\tau M).
\end{eqnarray*}
 
Therefore, using $i=(t_1,t_2)$,
\begin{eqnarray*}
(\exp(-\tau {\cal A}))_{kt} &=&
e_k^T {\rm vec}
\left ( (\exp(-\tau M)\otimes \exp(-\tau M))e_{i} e_{t_3}^T \exp(-\tau M)  \right ) \\
&=& e_k^T {\rm vec}\left( {\rm vec}(\exp(-\tau M) e_{t_1} e_{t_2}^T \exp(-\tau M) ) 
e_{t_3}^T \exp(-\tau M) \right ) \\
&=& e_k^T {\rm vec}
\left ( 
\begin{bmatrix}
\exp(-\tau M) e_{t_1} (e_{t_2}^T \exp(-\tau M)e_1) \\
\exp(-\tau M) e_{t_1} (e_{t_2}^T \exp(-\tau M)e_2) \\
\vdots \\
\exp(-\tau M) e_{t_1} (e_{t_2}^T \exp(-\tau M)e_n) 
\end{bmatrix}
\right ) e_{t_3}^T \exp(-\tau M) \\
&=& (e_{k_1}^T \exp(-\tau M) e_{t_1}) \,(e_{t_2}^T \exp(-\tau M)e_{k_2})\, (e_{t_3}^T \exp(-\tau M)e_{k_3}) .
\end{eqnarray*}
The rest follows as in the proof of Theorem \ref{th:exp}.
\end{proof}

\begin{remark}
Using (\ref{eqn:sin_kron}),
one can obtain similar bounds for $\cos({\cal A})$ and $\sin({\cal A})$,
where ${\cal A} = M_1\otimes I + I \otimes M_2$ with $M_1$, $M_2$ banded.
\end{remark}

%
\subsection{Laplace--Stieltjes functions}
If $f$ is a Laplace--Stieltjes function, then
$f({\cal A})$ is well-defined and exploiting the relation 
(\ref{eqn:exp_kron}) we can write
$$
f({\cal A}) = \int_{0}^\infty \exp (-\tau{\cal A}) {\rm d}\alpha(\tau)
              = \int_{0}^\infty \exp (-\tau M)\otimes \exp (-\tau M) {\rm d}\alpha(\tau).
$$

Thus, using $k=(k_1,k_2)$ and $t=(t_1,t_2)$,
\begin{eqnarray}
(f({\cal A}))_{kt} & = & \int_{0}^\infty e_k^T\exp (-\tau M)\otimes \exp (-\tau M)e_t 
{\rm d}\alpha(\tau)\nonumber\\
&=& \int_{0}^\infty (\exp (-\tau M))_{k_1 t_1}(\exp (-\tau M))_{t_2 k_2}{\rm d}\alpha(\tau) .
\nonumber
\end{eqnarray}


With the notation of Theorem \ref{th:LS}, we have
\begin{equation}\label{eqn:LS_kron}
|f({\cal A})|_{kt}\le \int_0^{\infty} \exp(-2\lambda_{\min} \tau) 
|\exp(-\tau \widehat M)|_{k_1 t_1} |\exp(-\tau \widehat M)|_{k_2 t_2} {\rm d}\alpha(\tau) .
\end{equation}
In this form, the bound (\ref{eqn:LS_kron}), of course, is not particularly 
useful. Explicit bounds can be obtained, for specific examples of 
Laplace--Stieltjes functions, by evaluating or bounding the integral
on the right-hand side of (\ref{eqn:LS_kron}). 

For instance,
using once again the inverse square root, so that $\alpha(\tau) = \sqrt{\pi/\tau}$,
we obtain
\begin{eqnarray}
|{\cal A}^{-\frac 1 2}|_{kt}&\le&
\sqrt{\pi} \int_0^{\infty} \frac {1}{\sqrt{\tau^3}} \exp(-2\lambda_{\min} \tau) 
|\exp(-\tau \widehat M)|_{k_1 t_1} |\exp(-\tau \widehat M)|_{k_2 t_2} {\rm d}\tau 
\label{eqn:LS-1/2kron}\\
&\le &
\sqrt{\pi} 
\left(\int_0^{\infty} \left (\frac 1 {\tau^{3/4}} \exp(-\lambda_{\min} \tau) 
|\exp(-\tau \widehat M)|_{k_1 t_1}\right)^2 {\rm d}\tau\right)^{\frac 1 2} \cdot \nonumber\\
& & \qquad
\left(\int_0^{\infty} \left (\frac 1 {\tau^{3/4}} \exp(-\lambda_{\min} \tau) 
|\exp(-\tau \widehat M)|_{k_2 t_2}\right)^2 {\rm d}\tau\right)^{\frac 1 2} \nonumber .
\end{eqnarray}
The two integrals can then be bounded as done in 
Theorem~\ref{th:LS}.
For the other example we have considered earlier,
namely the function $f(x)=(1-\exp(-x))/x$, the bound is the same except that
$\sqrt{\pi}/\sqrt{\tau^3}$ is replaced by one, and the integration interval 
reduces to $[0,1]$; see
also Example~\ref{ex:kron_f3} next.
\vskip 0.03in

\begin{example}\label{ex:kron_f3}
{\rm
We consider again the function $f(x) = (1-\exp(-x))/x$, and 
the two choices of matrix $M$ in Example~\ref{ex:exp}; 
for each of them we build $\cal A$ as the Kronecker sum ${\cal A}=M\otimes I + I\otimes M$.
The entries of the $t$th column with $t=94$, that is $(t_1,t_2)=(14,5)$ are
shown in Figure~\ref{fig:kronLS_f3}, together with the bound obtained above.
The oscillating pattern is well captured in both cases, with a particularly
good accuracy also in terms of magnitude in the tridiagonal case.
The lack of approximation near the diagonal reflects the condition 
$|k_i - t_i|/\beta\ge 2$, $i=1,2$.
}
\end{example}

\begin{figure}[thb]
\centering
\includegraphics[width=2.5in,height=2.5in]{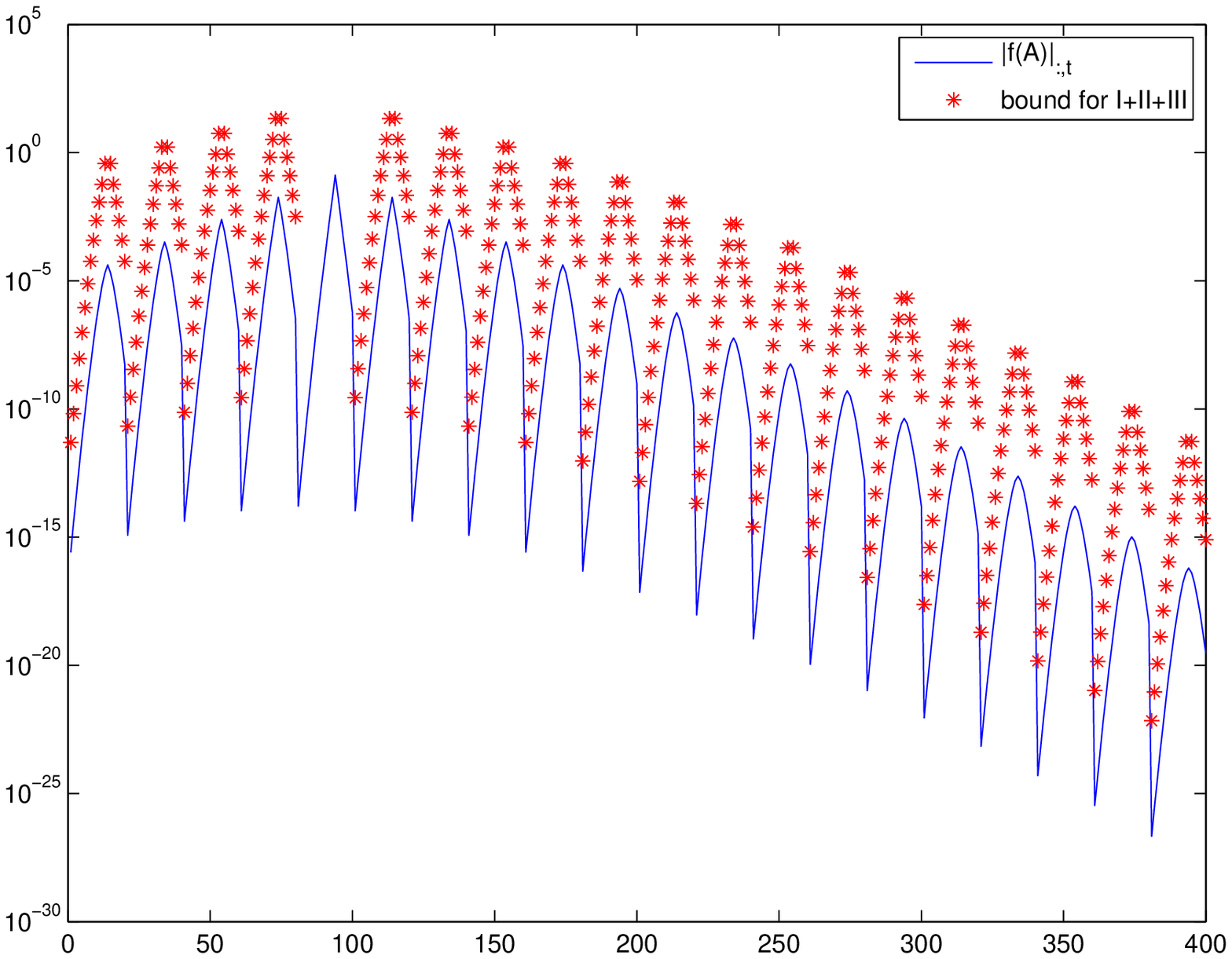}
\includegraphics[width=2.5in,height=2.5in]{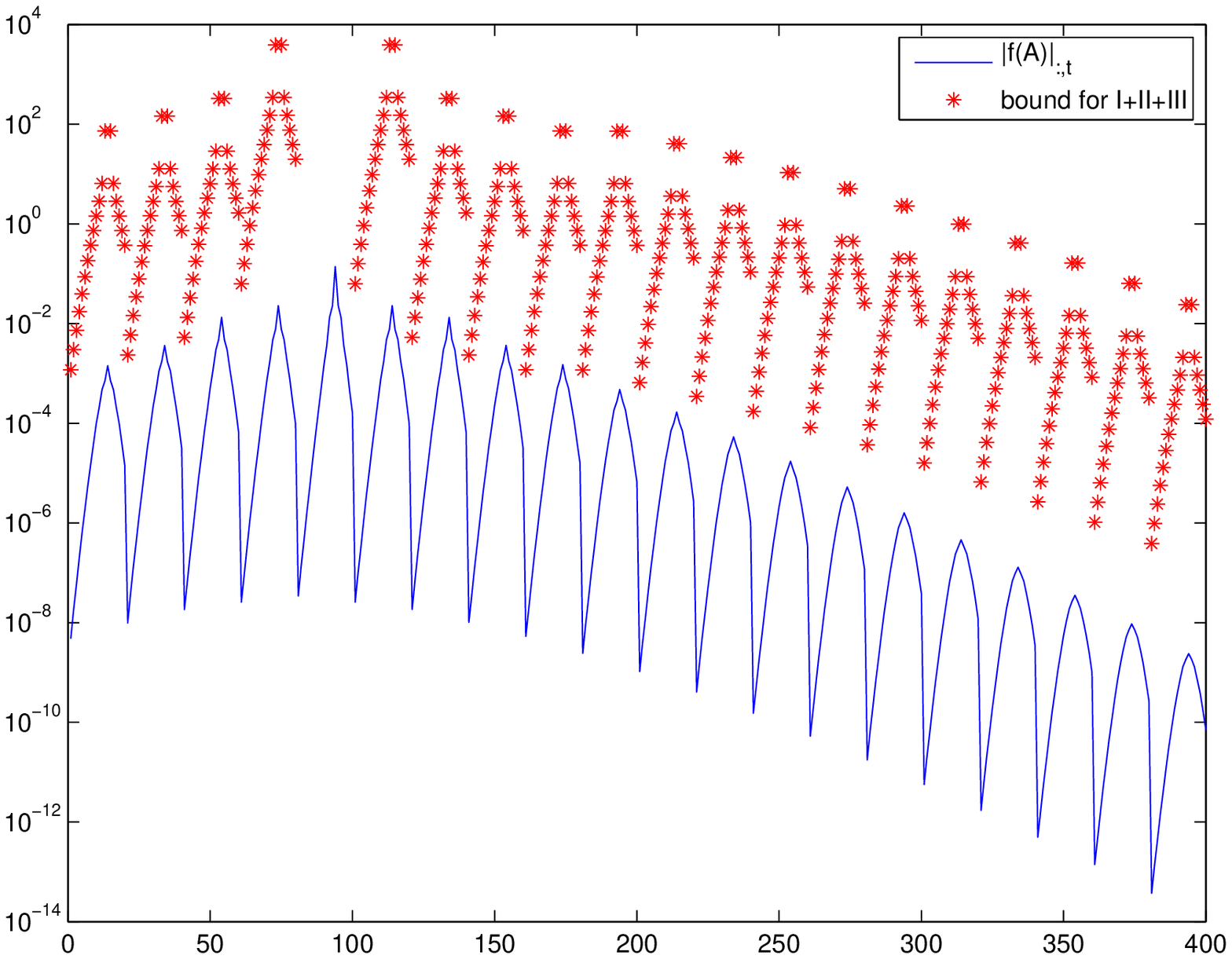}
\caption{Example \ref{ex:kron_f3}.
True decay and estimates for $|f(A)|_{:,t}$, $t=94$,
$A=M\otimes I +I \otimes M$ of 
size $n=400$.  Left: $M={\rm tridiag}(-1,4,-1)$.
Right: $M={\rm pentadiag}(-0.5,-1,4,-1,-0.5)$.
\label{fig:kronLS_f3}}
\end{figure}

\vskip 0.03in

\begin{remark}
These results can be easily extended to the case where
${\cal A} =  M_1\otimes I + I \otimes M_2$ with $M_1$, $M_2$ both
Hermitian positive definite and having bandwidths $\beta_1$, $\beta_2$.
It  can also be generalized to the case where ${\cal A}$ is the Kronecker
sum of three or more banded matrices.
\end{remark}

\subsection{Cauchy--Stieltjes functions}
If $f$ is a Cauchy--Stieltjes function and $\cal A$ has no eigenvalues on the
closed set $\Gamma \subset \CC$, then 
$$
f({\cal A}) = 
\int_\Gamma ({\cal A} -\omega I)^{-1} {\rm d}\gamma (\omega) ,
$$
so that
$$
e_k^Tf({\cal A}) e_t = 
\int_\Gamma e_k^T({\cal A} - \omega I)^{-1}e_t {\rm d}\gamma (\omega) .
$$

We can write
${\cal A} - \omega I = M\otimes I + I \otimes ( M - \omega I)$. 
Each column $t$ of the matrix inverse, $x_t := (\omega I - {\cal A})^{-1}e_t$, may be viewed as
the matrix solution $X_t=X_t(\omega)\in\CC^{n\times n}$ to the following Sylvester 
matrix equation:
$$
M X_t + X_t(M - \omega I)  = E_t, \qquad 
x_t = {\rm vec}(X_t),
\quad
e_t = {\rm vec}(E_t) ,
$$
where the only nonzero element of $E_t$ is in position $(t_1,t_2)$; here the same
lexicographic order of the previous sections is used
to identify $t$ with $(t_1,t_2)$.

From now on, we assume that $\Gamma = (-\infty, 0]$. 
We observe that the Sylvester equation has a unique solution, since
no eigenvalue of $M$ can be an eigenvalue of $\omega I - M$ 
for $\omega \le 0$ (recall that
$M$ is Hermitian positive definite).



Following Lancaster (\cite[p.556]{Lancaster1970}),
the solution matrix $X_t$ can be written as 
$$
X_t = 
-\int_{0}^{\infty}
\exp(-\tau M)E_t \exp(-\tau(M - \omega I)) {\rm d}\tau .
$$
For $k=(k_1,k_2)$ and $t=(t_1,t_2)$ this gives
{
\begin{eqnarray}\label{eqn:boundA}
 e_k^T(\omega I - {\cal A})^{-1}e_t 
& = & e_{k_1}^T X_t e_{k_2} \nonumber\\
&=& 
-\int_{0}^{\infty}
e_{k_1}^T
\exp(-\tau M)e_{t_1} e_{t_2}^T \exp(-\tau(M - \omega I)) e_{k_2} {\rm d}\tau .
\end{eqnarray}
}
Therefore, in terms of the original matrix function component,
\begin{eqnarray*}
 e_k^Tf({\cal A}) e_t  
=
-\int_{-\infty}^0
 \int_{0}^{\infty}
e_{k_1}^T
\exp(-\tau M)e_{t_1} e_{t_2}^T \exp(-\tau(M - \omega I)) e_{k_2} {\rm d}\tau 
{\rm d}\gamma (\omega) .
\end{eqnarray*}
We can thus bound each entry as
\begin{eqnarray*}
 |e_k^Tf({\cal A}) e_t|  
 \le
 \int_{0}^{\infty}
\left( |\exp(-\tau M)|_{k_1 t_1}
|\exp(-\tau M )|_{k_2 t_2}
\int_{-\infty}^0
 \exp(\tau \omega ) 
{\rm d}\gamma (\omega) \right)
{\rm d}\tau  .
\end{eqnarray*}
It is thus apparent that 
 $|e_k^Tf({\cal A}) e_t|$ can be bounded in a way analogous to the
case of Laplace--Stieltjes functions, once the term
$\int_{-\infty}^0 \exp(\tau \omega ) {\rm d}\gamma (\omega)$ is completely
determined. In particular, for $f(x) = x^{-1/2}$, 
we obtain 
\begin{eqnarray*}
\int_{-\infty}^0 \exp(\tau \omega ) {\rm d}\gamma (\omega) 
&=&
\frac{1}{\pi} \int_{-\infty}^0 \exp(\tau \omega )\frac{1}{\sqrt{-\omega}} {\rm d}\omega \\
&=&
\frac{2}{\pi} 
\int_{0}^{\infty} \exp(-\tau \eta^2 )\frac{1}{\eta} {\rm d}\eta \\
&=&
\frac{2}{\pi} \frac{\sqrt{\pi}}{2\sqrt{\tau}} =
\frac{1}{\sqrt{\pi}} f(\tau).
\end{eqnarray*}
Therefore, 
\begin{eqnarray}\label{eqn:A-1/2_CS}
\!\! |{\cal A}^{-\frac 1 2}|_{kt}
 \le
\frac{1}{\sqrt{\pi}} \!\!
\left ( \int_0^\infty \!\!\! |\exp(-\tau M)|_{k_1 t_1}^2 f(\tau) d\tau)\!\right )^{\frac 1 2}
\!\!
\left ( \int_0^\infty \!\!\! |\exp(-\tau M)|_{k_2 t_2}^2 f(\tau) d\tau)\!\right )^{\frac 1 2}.
\end{eqnarray}
Using once again the bounds in Theorem~\ref{th:boundexp} a final integral upper bound
can be obtained, in the same spirit as for Laplace--Stieltjes functions.

We explicitly mention that the solution matrix $X_t$ could be alternatively written
in terms of the resolvent $(M - \zeta i I)^{-1}$, with $\zeta\in\RR$ \cite{Lancaster1970}.
This would allow us to obtain an integral upper bound 
for $|e_k^Tf({\cal A}) e_t|$ by means of Proposition~\ref{prop:Freund} and of
(\ref{eqn:shiftedM_CS}). We omit the quite technical computations, however
the final results are qualitatively similar to those obtained above.

\begin{example}\label{ex:A-1/2_CS}
{\rm
In Figure~\ref{fig:A-1/2_CS} we report the actual decay and our estimate following
(\ref{eqn:A-1/2_CS}) for the inverse square root, again using
 the two matrices of our previous examples.
We observe that having used estimates for the exponential to handle the
Kronecker form, the approximations are slightly less sharp than previously 
seen for Cauchy--Stieltjes functions. Nonetheless, the qualitative behavior
is captured in both instances.
}
\end{example}

\begin{figure}[thb]
\centering
\includegraphics[width=2.5in,height=2.5in]{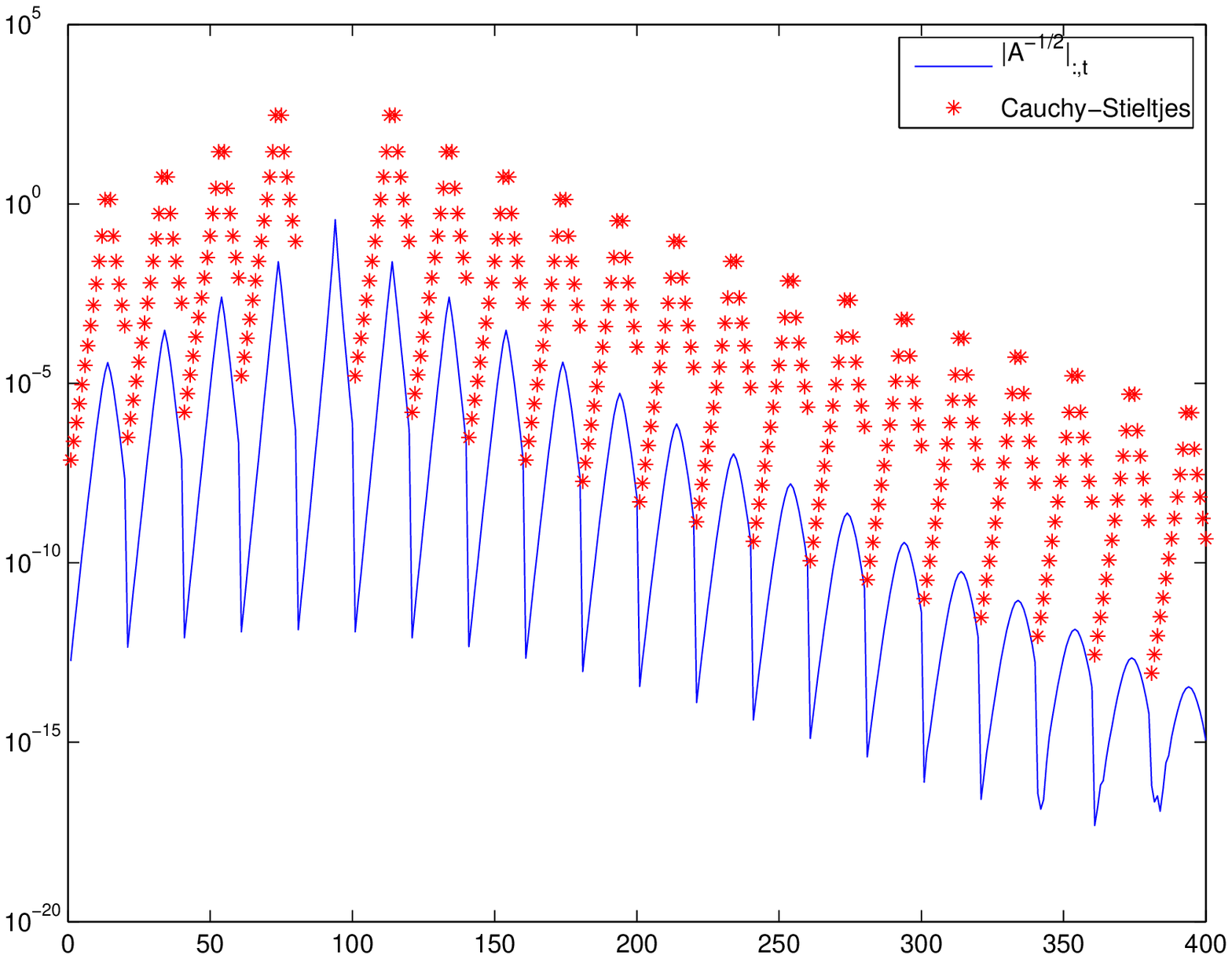}
\includegraphics[width=2.5in,height=2.5in]{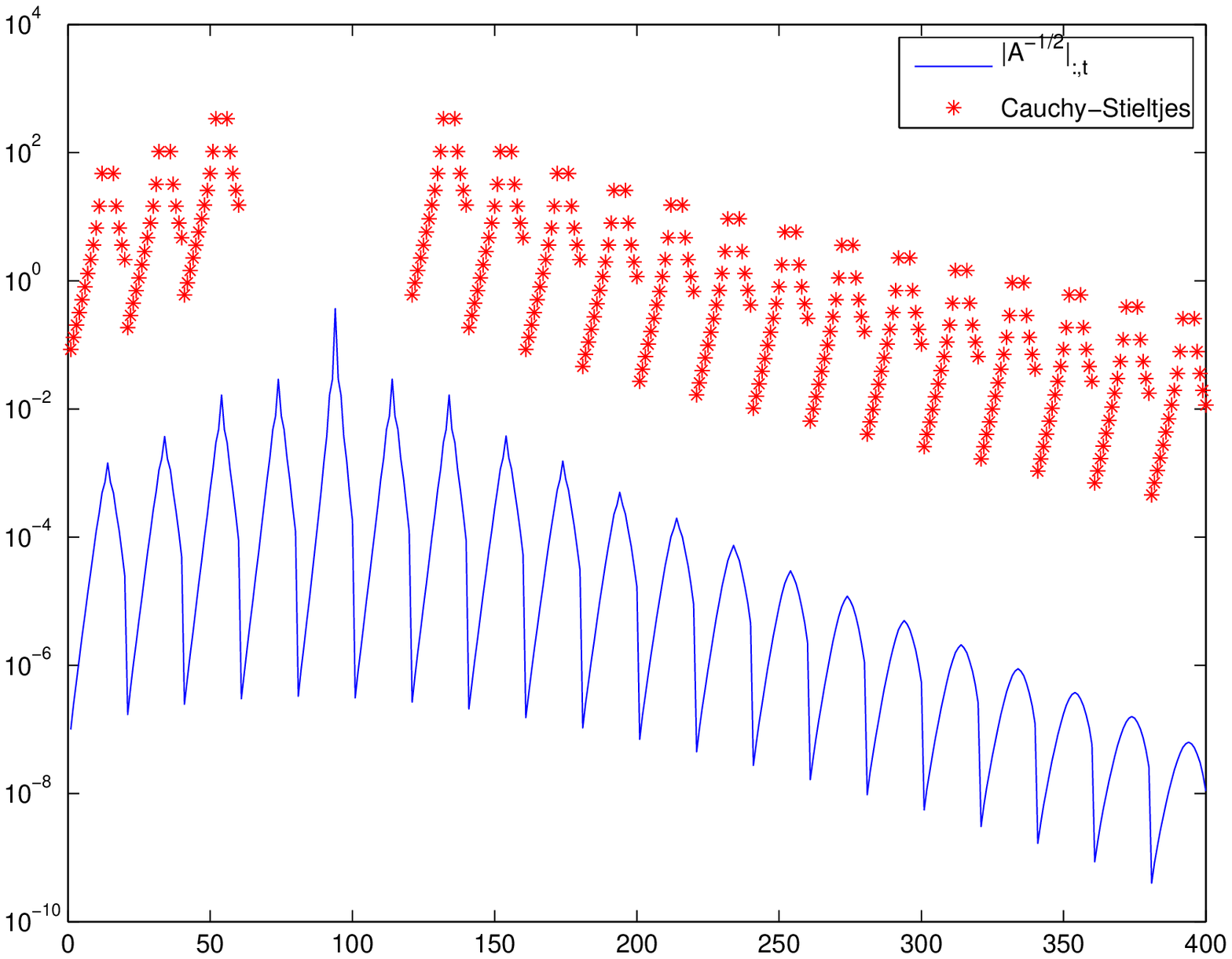}
\caption{Example \ref{ex:A-1/2_CS}.
True decay and estimates for $|A^{-\frac 1 2}|_{:,t}$, $t=94$,
$A=M\otimes I +I \otimes M$ of
size $n=400$.  Left: $M={\rm tridiag}(-1,4,-1)$.
Right: $M={\rm pentadiag}(-0.5,-1,4,-1,-0.5)$.
\label{fig:A-1/2_CS}}
\end{figure}

\begin{remark}
{\rm
As before,
the estimate for $(f({\cal A}))_{k,t}$
can be generalized to
the sum ${\cal A} = M_1 \otimes I + I \otimes M_2$, with both
$M_1, M_2$ Hermitian and positive definite matrices.
}
\end{remark}

\begin{remark}
{\rm
Using the previous remark,
the estimate for the matrix function entries
can be generalized to matrices that are
sums of several Kronecker products. For instance, if
$$
{\cal A} = 
M\otimes I\otimes I +
I\otimes M\otimes I +
I\otimes I\otimes M  ,
$$
then we can write 
$$
{\cal A} = 
M\otimes (I\otimes I) +
I\otimes (M\otimes I + I\otimes M ) =: M \otimes I + I \otimes M_2 ,
$$
so that, following the same lines as in (\ref{eqn:boundA})  we get
\begin{eqnarray*}
e_k^Tf({\cal A})e_t &=& \int_{\Gamma} e_k^T ({\cal A} - \omega I)^{-1}e_t {\rm d}\gamma (\omega) \\
& = & 
-
\int_\Gamma
\int_0^{\infty}
 e_{k_1}^T\exp(-\tau M)^{-1} e_{t_1} e_{t_2}^T \exp(-\tau(M_2-\omega I)) e_{k_2} {\rm d}\tau
{\rm d}\gamma(\omega)   .
\end{eqnarray*}
Since $M_2= M\otimes I + I\otimes M$, 
we then obtain 
$e_{t_2}^T  \exp(-\tau M_2) e_{k_2} =
e_{t_2}^T  \exp(-\tau M)\otimes \exp(-\tau M) e_{k_2}$. Splitting $t_2, k_2$ in their
one-dimensional indices, the available bounds can be employed to obtain a final 
integral estimate.
}
\end{remark}

\section{Conclusions} \label{sec:Conc}

In this paper we have obtained new decay bounds for the entries of certain analytic
functions of banded and sparse matrices, and used these results to obtain bounds
for functions of matrices that are Kronecker sums of banded (or sparse) matrices.
The results apply to strictly completely monotonic functions and to Markov functions,   
which include a wide variety of functions arising in mathematical
physics, numerical analysis, network science, and so forth.

The new bounds are in many cases considerably sharper than 
previously published bounds and they are able to capture the 
oscillatory, non-monotonic decay behavior observed in the
entries of $f({\cal A})$ when  $\cal A$ is a Kronecker sum.
Also, the bounds capture the superexponential decay behavior
observed in the case of entire functions.

A major difference with previous decay results is that the new bounds
are given in integral form, therefore their use requires some work 
on the part of the user. If desired, these quantities can be
further bounded for specific function choices. In practice, the
integrals can be evaluated numerically to obtain explicit bounds
on the quantities of interest.

Although in this paper we have focused mostly on the Hermitian case,
extensions to functions of more general matrices may be possible, as
long as good estimates on the entries of the matrix exponential and
resolvent are available. We leave the development of this idea for
possible future work.

\end{document}